\newcommand{\N}{\ensuremath{\mathbb{N}}}
\newcommand{\R}{\ensuremath{\mathbb{R}}}
\newcommand{\E}{\ensuremath{\mathbb{E}}}
\renewcommand{\P}{\ensuremath{\mathbb{P}}}
\newcommand{\ind}[1]{\ensuremath{\mathbbm{1}_{\{#1\}}}}
\newcommand{\diff}{\mathop{}\mathopen{}\mathrm{d}}
\newcommand{\cal}[1]{\ensuremath{\mathcal{#1}}}
\newcommand{\Poi}{\cal{N}}
\def\etal{et al.}
\newtheorem{proposition}{Proposition}
\title[Transient Behavior of Ehrenfest and Engset Processes]{On the Transient Behavior of
  Ehrenfest and Engset Processes}
\date{\today}
\author[M. Feuillet]{Mathieu Feuillet}
\address[M. Feuillet,Ph. Robert]{INRIA Paris---Rocquencourt, Domaine de Voluceau, 78153 Le Chesnay, France}
\email{Mathieu.Feuillet@inria.fr}
\author[Ph. Robert]{Philippe Robert}
\email{Philippe.Robert@inria.fr}
\urladdr{\href{http://www-rocq.inria.fr/~robert}{http://www-rocq.inria.fr/\~{}robert}}
\keywords{Ehrenfest Process. Engset Formula. Exponential Martingales. Space-Time Harmonic
  Functions. Hitting Times. }
\begin{document}

\begin{abstract}
Two classical  stochastic processes are  considered, the Ehrenfest process,  introduced in
1907 in the kinetic  theory of gases to describe the heat  exchange between two bodies and
the  Engset  process,  one  of   the  early  (1918)  stochastic  models  of  communication
networks. This paper investigates the  asymptotic behavior of the distributions of hitting
times  of these  two processes  when  the number  of particles/sources  goes to  infinity.
Results concerning the hitting times of  boundaries in particular are obtained.  The paper
relies  on  martingale  methods,  a  key  ingredient is  an  important  family  of  simple
non-negative  martingales, an  analogue, for  the  Ehrenfest process,  of the  exponential
martingales used in the study of random walks or of Brownian motion.

\vspace{5mm} \bigskip\centerline{\em \`A la m\'emoire de Philippe Flajolet.}

\end{abstract}

\maketitle

\hrule

\vspace{-3mm}

\tableofcontents

\vspace{-8mm}

\hrule

\bigskip

\section{Introduction}

\subsection*{The Ehrenfest Process}
In  this paper  one  considers the  following  continuous time  version  of the  classical
Ehrenfest  urn model. This process has been introduced  to study the  heat exchange  between
bodies.  One assumes that each particle of a set of $N$ particles is located in one of two
boxes (bodies),  $0$ and  $1$ say. A  particle in box  $0$ [resp.  $1$] goes into  box $1$
[resp. $0$] at rate $\nu$ [resp. $\mu$]. One denotes as $E_N(t)$ the number of particles in
box~$1$ at time $t\geq 0$. This birth and death process can also be represented as
$E_N(t)=Y_1(t)+\cdots+Y_N(t)$, where $(Y_i(t), 1\leq i\leq N)$ are $N$ i.i.d. Markov jump
processes with values in $\{0,1\}$. 

Originally, the model is a discrete time process $(Z_N(k))$ and each unit of time a particle
is taken at random (i.e. equally likely) to  be moved from one box to the other, $Z_N(k)$ is
the number of particles in box $1$  at time~$k$.  This corresponds to the symmetrical case
$\mu=\nu$  and,  clearly,  $E_N(t)$  can  be  represented  as  $Z_N(\Poi_{N\mu}((0,t]))$  if
  $\Poi_{N\mu}$ is a Poisson process with rate $N\mu$.  The process $(E_N(t))$ follows the
  same  path as $(Z_N(k))$  but on  a time  scale with  a factor  $N\mu$.  The  Markov chain
  $(Z_N(k))$ is also random walk on the graph of the {\em hypercube} $\{0,1\}^N$ where edges
  connect  elements that  differ at  only one  coordinate. The  equilibrium  properties of
  $(E_N(t))$ and $(Z_N(k))$  are fairly well known, in particular a  quite precise estimate of
  the   duration  of  time   to  reach   equilibrium  is   available.   See   Diaconis  et
  al.~\cite{Diaconis}.

Results on transient  quantities of this process, like the distribution  of the first time
when the  box $0$ is empty,  are more rare. There  are generic results on  birth and death
process which describe some of these distributions in terms of spectral characteristics of
the associated  infinitesimal generator: the spectral  measure and a  family of orthogonal
polynomials.       See     Karlin      and      McGregor~\cite{Karlin:02,Karlin-2}     and
Keilson~\cite{Keilson:01}.   In practice,  the corresponding  orthogonal  polynomials and,
sometimes, their  spectral measure do not  have a simple  representation, this complicates
significantly detailed  investigations of  these hitting times.   In the  symmetrical case
$\mu{=}\nu$,  Bingham~\cite{Bingham}  and  Flajolet  and  Huillet~\cite{Flajolet}  give  a
representation of their distributions.  In the general case Crescenzo~\cite{Crescenzo} and
Flegg et  al.~\cite{Flegg} provide expressions  of the densities.   It turns out  that the
expressions obtained in these papers involve,  in general, sums of combinatorial terms for
which asymptotic  results (when $N$  goes to infinity)  may be difficult to  obtain.  Note
that   this   is   nevertheless  done   in   the   symmetrical   case  in   Flajolet   and
Huillet~\cite{Flajolet}.

\subsection*{The Engset Process}
This   is  one  of   the  oldest   stochastic  models   of  communication   networks.  See
Engset~\cite{Engset}.  For this model  there are  $N$ sources  of communication  which are
active (state  $1$) or  inactive (state $0$).  An active  source becomes inactive  at rate
$\mu$. The total  number of simultaneous active sources cannot  exceed the quantity $C_N$,
the number of circuits of the network. An inactive source can therefore become active only
if there are  already strictly less $C_N$ active  sources, in this case it  occurs at rate
$\nu$. If $X_N(t)$  is the number of  active communications at time $t$,  when $C_N=N$ the
process  is just  the  Ehrenfest process.   Otherwise  $(X_N(t))$ can  be  described as  a
reflected version  of $(E_N(t))$. In  this context, an  important quantity is the  time it
takes to have  the full capacity of the  network used, i.e.  the hitting time  of $C_N$ by
$(X_N(t))$.  If the  equilibrium probability that $X_N$ is $C_N$ is  known, under the name
of {\em  Engset Formula}. To  the best of  our knowledge, results concerning  this hitting
time are quite rare, in particular for possible asymptotics when $N$ goes to infinity.

\subsection*{A Storage System}
Another, more recent, motivation for  considering $(X_N(t))$ is the stochastic analysis of
a storage  system where  files are duplicated  on $C_N$  servers. Each server  breaks down
independently at rate $\mu$ in which case it is repaired but all its files are lost.  As a
simplified model,  $X_N(t)$ is defined  as the  number of copies  of a specified  file, if
$X_N(t)=x$ then  a copy of the file  is lost if one  of the servers breaks  down, i.e.  at
rate $x\mu$.   If $X_N(t)=0$, there is no  copy of the file  in the system, it  is lost so
that $0$ is an absorbing point. If $0<x<C_N$ then  a new copy of the file may be added but
at rate  $N-x$, $N$ is the  maximal capacity of duplication  of the system.   It is easily
seen that  as long  as $(X_N(t))$  does not hit  $0$, $(X_N(t))$  is precisely  the Engset
process.  In this context it is of special interest to study the distribution of the first
time when the file is lost, i.e. the hitting time of $0$. See Feuillet and
Robert~\cite{Feuillet:02}. 

\subsection*{A Collection of Exponential Martingales}
This  paper  relies  heavily  on  the  use of  martingales  to  derive  explicit,  simple,
expressions of the Laplace  transforms of the hitting times of a  state of the system. One
obtains expressions  of these transforms as  ratios of simple integrals  for which various
asymptotic results, when  $N$ goes to infinity, can be derived  quite easily with standard
technical tools. In particular  one does not need to cope with  the asymptotic behavior of
sums of  combinatorial expressions.   Quite surprisingly, up  to now, martingales  did not
play  a major  role in  the previous  studies of  the Ehrenfest  process. One  can mention
Simatos and  Tibi~\cite{Simatos} where a martingale  approach is used  to estimate certain
exit times  for multi-dimensional Ehrenfest  processes. It is  one of the results  of this
paper to show  that a simple and  important family of martingales allows  a quite detailed
investigation of this process, and also of its variants like the Engset process.

The  key  ingredient  of  this  paper  is  a set of non-negative  martingales which will
be called exponential martingales.  If $(M(t))$ is a martingale on some probability space,
the associated {\em exponential   martingale} is the solution $(Z(t))$ of the stochastic
differential equation (SDE) 
\[
\diff Z(t)=Z(t-)\diff M(t), \quad t\geq 0,
\]
where $Y(t-)$ is the left  limit of $Y$ at $t$ and $\diff Y(t)$ is  the limit on the right
of $t$  of $s\mapsto Y(s)-Y(t-)$.  It is called  the {\em  Dol\'eans exponential} of  $(M(t))$. See
Chapter~IV  of Rogers  and Williams~\cite{rogers-00}  for  example.  Despite  there is  an
exponential martingale  for each martingale, a  small subset of these  martingales plays an
important role. For  the standard Brownian motion  $(B(t))$ this is the martingale
\[
\left(\exp\left(\beta B(t)-{\beta^2}t/2 \right)\right),
\]
for a fixed $\beta\in\R$. It is very helpful to derive the explicit expressions of Laplace
transforms   of  hitting   times   associated   to  Brownian   motion.    See  Revuz   and
Yor~\cite{Revuz:01}.  For  jump processes,  this is less  clear. It  does not seem  that a
``classification'' of exponential martingales exists  in general, even for birth and death
processes.   See Feinsilver~\cite{Feinsilver}  for  related questions.   Some examples  of
important processes are reviewed.

For  $\xi\in\R_+$, $\Poi_\xi$ denotes a Poisson process with rate $\xi$ and
$(\Poi_{\xi,i})$ a sequence of i.i.d. such Poisson processes.  All Poisson
processes are assumed to be independent. 
\begin{itemize}
\item {\bf Random Walks.} The classical exponential martingale associated to the random
  walk $(S(t))=(\Poi_\lambda([0,t])-\Poi_\mu([0,t]))$ is given by, for $\beta\in\R$,
\begin{equation}
\left(\exp\left[-\beta S(t) -t \left(\lambda
\left(1-e^{-\beta}\right)+\mu \left(1-e^{\beta}\right)\right)\right]\right).
\end{equation}
It is the exponential martingale associated to the martingale
\[
\left(\rule{0mm}{4mm}\beta(S(t)-(\lambda-\mu)t)\right).
\]
The corresponding reflected process is the $M/M/1$ queue with input rate $\lambda$ and
service rate $\mu$. 
\item {\bf The $\mathbf{M/M/\infty}$ Process.} This is a classical Markov process on $\N$ whose
  $Q$-matrix $Q=(q(x,y))$ is, for $x\in\N$, $q(x,x+1)=\lambda$ and $q^N(x,x-1)=
  \mu x$. 
It can be also seen as a kind of discrete Ornstein-Uhlenbeck
process, defined as the solution $(L(t))$ of the following SDE
\[
\diff L(t)={\cal N}_\lambda(\diff t)-\sum_{i=1}^{L(t-)} {\cal N}_{\mu,i}(\diff t).
\]
The following martingale has been introduced in Fricker
\etal~\cite{Fricker:07}, for $\beta\in\R$,  
\begin{equation}\label{PC}
\left(\left(1+\beta e^{\mu t}\right)^{L(t)} \exp\left(-\beta e^{\mu t}{\lambda}/{\mu}\right)\right). 
\end{equation}
It is the exponential martingale associated to the martingale
\[
\left(\int_0^t \left(1{+}\beta e^{\mu s}\right)[\Poi_\lambda(\diff s){-}\lambda\diff s]
-\sum_{i=1}^{+\infty}\int_0^t \frac{1}{1{+}\beta e^{\mu s}}\ind{i<L(s-)}[\Poi_{\mu,i}(\diff s){-}\mu\diff s]\right).
\]
\item {\bf The Ehrenfest Process}. Such a process $(E_N(t))$ with $N$ particles can be seen as the solution of
  the SDE
\[
\diff L(t)=\sum_{i=1}^{N-L(t-)} {\cal N}_{\nu,i}(\diff t)-\sum_{i=1}^{L(t-)} {\cal N}_{\mu,i}(\diff t),
\]
as it will be seen the corresponding exponential martingale is given by, for $\beta\in\R$,
\begin{equation}
\left(\left(1-\beta\mu e^{(\mu+\nu)t}\right)^{E_N(t)} \left(1+\beta\nu e^{(\mu+\nu)t}\right)^{N-E_N(t)}\right).
\end{equation}
It is the exponential martingale associated to the martingale defined by, up to the
multiplicative factor $\beta(\mu{+}\nu)$, 
\[
\left(
\sum_{i=1}^N  \int_0^t  e^{(\mu{+}\nu) s}\left(\ind{Y_i(s-)=1}[\Poi_{\mu,i}(\diff s){-}\mu\diff s]
{-}\ind{Y_i(s-)=0}[\Poi_{\nu,i}(\diff s){-}\nu\diff s]\right)
\right),
\]
where  $(Y_i(t))$ are  such  that $E_N(t)=Y_1(t)+\cdots+Y_N(t)$.  Recall  that the  Engset
process is a reflected version of this process.
\end{itemize}
From  these exponential  martingales, explicit  expressions of  Laplace transforms  of the
distribution of hitting times associated to these processes can be derived.  It may be not
be as straightforward  as in the case of  Brownian motion since the space  variable $t$ is
not separated  from the space variable, but  a convenient integration with  respect to the
free parameter $\beta$ solves the  problem. See Chapter~5 and~6 of Robert~\cite{Robert:08}
for the  $M/M/1$ and  $M/M/\infty$ processes and  Section~\ref{martsec} for  the Ehrenfest
process.

From the point of view of potential theory, these martingales are associated to the set of
extreme harmonic  functions. This  statement can  be made precise  in terms  of space-time
Martin  boundary.   See  Lamperti  and  Snell~\cite{Lamperti:06}  and  the  discussion  in
Section~\ref{martsec}.

\subsection*{Organization of the paper}
In  Section~\ref{secmodel},  the  two  stochastic  processes  are  defined  precisely.  In
Section~\ref{martsec}, the exponential martingale for the Ehrenfest process is introduced,
and, based on it, several interesting martingales for the Ehrenfest process and the Engset
process are constructed. As a corollary,  closed form expressions of the Laplace transform
of the hitting time of a given state  are obtained as the ratio of simple integrals.  This
holds in particular for the blocking time for the Engset process.  The last three sections
are devoted to the analysis of the  asymptotic behavior of the distribution of the hitting
time of $C_N$ and  $0$ when $N$ goes to infinity in such a  way that $C_N{\sim}\eta N$ for
some  $0<\eta\leq 1$.  Each  section consider  one  of the  three  possible regimes:  {\em
  sub-critical} when  $\nu<\eta$, the  process ``lives'' in  the neighborhood of  $\nu N$,
{\em super-critical} when the difference  $C_N-X_N(t)$ converges to a finite process, and,
finally, {\em critical} when $C_N-X_N(t)$ is of the order of $\sqrt{N}$.  For each regime,
by  taking advantage of  the simple  expressions of  the corresponding  Laplace transforms
obtained, various convergence in distribution results are derived.

% LocalWords:  Ehrenfest Diaconis Bingham Flajolet Huillet Crescenzo Flegg SDE
% LocalWords:  Pollett Engset Revuz Yor Feinsilver Ornstein Uhlenbeck Fricker
% LocalWords:  Lamperti

\section{The Stochastic Model}\label{secmodel}
\subsection*{The Ehrenfest process}
Let $(Y(t))$ be  the simple Markov process  on $\{0,1\}$  whose $Q$-matrix $Q_Y$ is given
by 
\[
Q_Y=\begin{pmatrix}
-\nu&\nu\\\mu&-\mu
\end{pmatrix}.
\]
For $N\in\N$, 
if $(Y_i(t))$, $1\leq i\leq N$, are $N$ independent copies of $(Y(t))$, the Ehrenfest process $(E_N(t))$ is 
also a birth and death process but on the state space $\{0,1,\ldots,N\}$, it is 
defined as 
\begin{equation}\label{engdef}
E_N(t)=Y_1(t)+ Y_2(t)+\cdots+Y_N(t).
\end{equation}
The $Q$-matrix of $(E_N(t))$ will be denoted as $Q_{E_N}$, for $x\in\{0,\ldots,N\}$,
\begin{equation}\label{QmatEhr}
q_{E_N}(x,x-1) = \mu x \text{ and } 
q_{E_N}(x,x+1) = \nu (N-x).
\end{equation}

\subsection*{The Engset process}
For $1\leq  C_N\leq N$,  the Engset  process $(X_N(t))$ is  a birth  and death  process on
$\{1,\ldots,C_N\}$  which can be seen as a reflected version of $(E_N(t))$ at the
boundary $C_N$, i.e. its  $Q$-matrix $Q_{X_N}=(q_N(x,y))$  is given by, for $0\leq x\leq
C_N$,
\begin{equation}\label{Qmat}
q_{X_N}(x,x-1) = \mu x \text{ and } 
q_{X_N}(x,x+1) = \nu (N-x) \text{ if } x<C_N.
\end{equation}
In particular  the process  $(X_N(t))$ has the  same distribution  as the
process $(E_N(t))$  constrained to  the state space  $\{0,\ldots, C_N\}$. In particular, when $C_N=N$,
the two processes $(X_N(t))$ and $(E_N(t))$  starting from the same initial state have the
same distribution.

As  ergodic birth and death processes, the Markov processes $(X_N(t))$ and $(E_N(t))$ are
reversible and their stationary distribution at $x$ is, up to a normalization constant,
given by  
\[
\binom{N}{x}\left(\frac{\nu}{\mu}\right)^x,
\]
if $x$ is an element of  their respective state space.

\subsection*{Normalization of the time scale}
By considering the time scale $t\to t/(\nu+\mu)$ in the analysis of the processes $(E_N(t))$ and
$(X_N(t))$, it can be assumed without any loss of generality that $\nu+\mu=1$. This will
be the case in this paper.  

\subsection*{A limiting regime}
In the following it will be assumed that the constant  $C_N$ is asymptotically of the
order of $N$, i.e. that
\begin{equation}\label{rho}
\eta\stackrel{\text{def.}}{=}\lim_{N\to+\infty}{C_N}/{N},
\end{equation}
holds for some $\eta\in(0,1]$.

For $t$ large the probability that the variable $Y(t)$ defined above is at $1$ is given by
its equilibrium distribution at $1$, that is $\nu$.  The law of large numbers gives that $E_N(t)$
is of the  order of $N\nu$. Roughly  speaking, if $N\nu<C_N$ for $N$  large, i.e. $\nu<\eta$,
then the boundary at $C_N$ should not  play a significant role for first order quantities
related to $(X_N(t))$ and therefore the processes  $(X_N(t))$ and $(E_N(t))$ should have the same
behavior in the limit. On the contrary if $\nu\geq \eta$, due to the reflecting boundary at
$C_N$ for $(X_N(t))$, the Ehrenfest and the Engset processes should behave differently. 
This phenomenon will be stated more precisely in the last three sections of the paper. 

\section{Positive Martingales}\label{martsec}
Several families of positive martingales for the Ehrenfest and the Engset processes are
introduced in this section.  More specifically, when $(Z(t))$ is either $(X_N(t))$ or
$(E_N(t))$, one identifies a set of functions  $f:\N\times\R_+\mapsto \R_+$ such that the
process $(f(Z(t),t)$ is a martingale, i.e. that, for $t\geq 0$, the relation 
\[
\E\left(f(Z(t),t)\mid {\cal F}_t\right)=f(Z(s),s), \text{ for } s\leq t,
\]
holds almost surely, where ${\cal F}_t)$is the natural filtration associated to $(Z(t))$. 

If $Q_Z=(q_Z(\cdot,\cdot))$ is the $Q$-matrix of $(Z(t))$, this probabilistic property is
equivalent to the fact that the function $f$ is {\em space-time harmonic} with respect to $Q_Z$,
i.e. that the relation
\begin{equation}\label{sptime}
\frac{\partial}{\partial t} f(x,t)+ Q_Z(f(\cdot,t))(x)=0
\end{equation}
holds for $x\in\N$ and $t\geq 0$, where for $h:\N\mapsto\R_+$,
\[
Q_Z(h)(x)=\sum_{y\in\N} q(x,y)h(y).
\]
A space-time harmonic function of the Markov process $(Z(t))$ is just an harmonic function
of the  transient Markov process $((Z(t),t))$.  See  Appendix~B of Robert~\cite{Robert:08}
for  example.   When $(Z(t))$  is  $(E_N(t))$,  one will  prove  that  there  is a  family
$f_\beta$, $\beta\in\R$ of  such functions.  As it will be seen,  these martingales can be
interpreted  as  exponential  martingales.  They  will  give  in  particular  an  explicit
expression for  the Laplace transform  of the hitting  times associated to  both processes
$(E_N(t))$ and $(X_N(t))$.

For a given birth  and death process, there is already a  complete description of all such
positive martingales.   This is the  (space-time) Martin boundary  of the birth  and death
process, see  Lamperti and  Snell~\cite{Lamperti:06}. This description  is, unfortunately,
expressed in terms of the orthogonal polynomials associated to the birth and death process
which are defined, in general, by induction, see Karlin and Mc~Gregor~\cite{Karlin:02}. As
long as moments of some  transient characteristics are investigated, these martingales can
be used  but they  are not,  in general, really  helpful to  analyze the  distributions of
hitting times.

This situation is quite  classical, for Brownian motion for example, for  which there is a
family of  martingales indexed by $N\in\N$, if  $H_N$ is the Hermite  polynomial of degree
$N$,  then  $(M_N(t))=(t^{N/2}H_N(B(t)/\sqrt{t}))$ is  a  martingale.   Another family  of
martingales  is provided  by  the exponential  martingale $(\exp(\beta  B(t)-\beta^2t/2))$
indexed by $\beta\in\R$. This exponential martingale can be expressed as a weighted sum of
the  martingales $(M_N(t))$,  but  to get  explicit  expressions of  the distributions  of
hitting times, it  is the really useful martingale. See  Revuz and Yor~\cite{Revuz:01} for
example.  In the  case  of birth  and death  processes,  a general  result concerning  the
construction  of such  exponential  martingales  from the  martingales  associated to  the
orthogonal polynomials does not seem to exist.

\subsection{Exponential Martingales for the Ehrenfest Process}
Due  to the  simple  structure of  the  Ehrenfest process,  these  martingales are  really
elementary.  Nevertheless they play  a fundamental  role, most  of the  asymptotic results
obtained in this paper are based on these martingales. A more general version in a
multi-dimensional context has been introduced by Simatos and Tibi~\cite{Simatos}.
\begin{proposition}[Exponential martingales] \label{prop:hbeta}
For  $\beta\in\R$, the process 
\begin{equation}\label{expmart}
(M_{N}^{\beta}(t)) =  \left(\left(1-\beta\mu e^{t}\right)^{E_N(t)}
\left(1+\beta\nu e^{t}\right)^{N-E_N(t)}\right)
\end{equation}
is a martingale.
\end{proposition}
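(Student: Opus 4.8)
The plan is to verify the space-time harmonic equation \eqref{sptime} directly for the candidate function
\[
f_\beta(x,t) = \bigl(1-\beta\mu e^{t}\bigr)^{x}\bigl(1+\beta\nu e^{t}\bigr)^{N-x},
\]
with $Q_Z = Q_{E_N}$ given by \eqref{QmatEhr}, and then invoke the fact, recalled just before the proposition, that a space-time harmonic function of $(E_N(t))$ yields a martingale $(f_\beta(E_N(t),t))$. Since $\mu+\nu=1$ by the normalization convention, the exponents in \eqref{expmart} are indeed $e^{t}=e^{(\mu+\nu)t}$, matching the form announced earlier in the introduction, so no extra bookkeeping is needed there.

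First I would compute the time derivative. Writing $a = a(t) = 1-\beta\mu e^{t}$ and $b = b(t) = 1+\beta\nu e^{t}$, we have $a' = -\beta\mu e^{t}$ and $b' = \beta\nu e^{t}$, so
\[
\frac{\partial}{\partial t} f_\beta(x,t) = \Bigl(x\frac{a'}{a} + (N-x)\frac{b'}{b}\Bigr) a^{x} b^{N-x}
= \beta e^{t}\Bigl(-\frac{\mu x}{a} + \frac{\nu(N-x)}{b}\Bigr) a^{x} b^{N-x}.
\]
Next I would compute the generator term. From \eqref{QmatEhr}, with $f_\beta(\cdot,t)$ in the variable $x$,
\[
Q_{E_N}(f_\beta(\cdot,t))(x) = \mu x\,\bigl(a^{x-1}b^{N-x+1} - a^{x}b^{N-x}\bigr) + \nu(N-x)\,\bigl(a^{x+1}b^{N-x-1} - a^{x}b^{N-x}\bigr).
\]
Factoring out $a^{x-1}b^{N-x-1}$ (equivalently $a^{x}b^{N-x}$ after dividing through), the birth term contributes $\nu(N-x)(a/b - 1) a^{x}b^{N-x}$ and the death term contributes $\mu x(b/a - 1) a^{x}b^{N-x}$. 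Now $a - b = -\beta\mu e^{t} - \beta\nu e^{t} = -\beta(\mu+\nu)e^{t} = -\beta e^{t}$, hence $a/b - 1 = -\beta e^{t}/b$ and $b/a - 1 = \beta e^{t}/a$. Therefore
\[
Q_{E_N}(f_\beta(\cdot,t))(x) = \beta e^{t}\Bigl(\frac{\mu x}{a} - \frac{\nu(N-x)}{b}\Bigr) a^{x}b^{N-x},
\]
which is exactly the negative of $\partial_t f_\beta(x,t)$ computed above; so \eqref{sptime} holds for all $x\in\{0,\ldots,N\}$ and all $t\ge 0$.

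There is no serious obstacle here — the verification is a one-line algebraic identity once one notices the telescoping role of the relation $a-b = -\beta e^{t}$, which is precisely why the two exponents have the shape $1-\beta\mu e^{t}$ and $1+\beta\nu e^{t}$. The only point requiring a word of care is the passage from the space-time harmonic equation to the martingale property: one must check that $(f_\beta(E_N(t),t))$ is integrable, which is immediate since $E_N(t)$ takes values in the finite set $\{0,\ldots,N\}$ so that, for each fixed $t$, $f_\beta(E_N(t),t)$ is bounded, and standard results on space-time harmonic functions of Markov jump processes with bounded jump rates (as recalled in the text, see Appendix~B of Robert~\cite{Robert:08}) then give that the process is a genuine martingale with respect to the natural filtration $({\cal F}_t)$. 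I would close by remarking that this also exhibits $(M_N^\beta(t))$ as the Dol\'eans exponential of the compensated martingale displayed in the introduction, so the terminology ``exponential martingale'' is justified; this last identification is optional and follows by It\^o's formula for jump processes but is not needed for the statement.
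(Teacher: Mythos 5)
Your proof is correct, but it takes a genuinely different route from the paper's. You verify the space--time harmonic equation~\eqref{sptime} for $h_N^\beta$ directly against the $N$-state $Q$-matrix~\eqref{QmatEhr}, the key cancellation being $b-a=\beta(\mu+\nu)e^{t}=\beta e^{t}$. The paper instead exploits the particle representation $E_N(t)=Y_1(t)+\cdots+Y_N(t)$: it checks harmonicity of $h_1^\beta$ for the two-state generator $Q_Y$ (a two-line computation) and then observes that the product of the $N$ independent single-particle martingales $(h_1^\beta(Y_i(t),t))$ is a martingale whose value is exactly $h_N^\beta(E_N(t),t)$. The paper's argument is essentially computation-free beyond the $2\times 2$ case and explains structurally why the exponential martingale factorizes over particles; yours has the advantage of working directly at the level of the birth-and-death generator, which is precisely the form of harmonicity invoked later (in Propositions~\ref{prop:IJalpha} and~\ref{propmartref}) and does not require the representation~\eqref{engdef}. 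You also address integrability explicitly, which the paper leaves implicit; since the state space is finite this is indeed immediate. One cosmetic point: your intermediate expressions divide by $a=1-\beta\mu e^{t}$ and $b=1+\beta\nu e^{t}$, which can vanish for suitable $\beta\in\R$ and $t$; it is cleaner to write the increments without division, as
\[
f_\beta(x+1,t)-f_\beta(x,t)=a^{x}b^{N-x-1}(a-b),\qquad
f_\beta(x-1,t)-f_\beta(x,t)=a^{x-1}b^{N-x}(b-a),
\]
which yields the same polynomial identity valid for all $\beta\in\R$ and $t\geq 0$ (the boundary cases $x=0$ and $x=N$ being harmless since the corresponding rates vanish).
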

\begin{proof}
Define, for $N\geq 1$, $t\geq 0$ and $0\leq x\leq N$,
\begin{equation}\label{hn}
h_N^\beta(x,t)=(1-\beta\mu e^t)^x(1+\beta\nu e^t)^{N-x}.
\end{equation}
Clearly, the relations
\[
 \frac{\partial}{\partial t} h_1^\beta(0,t){=}\beta\nu e^t{=}
{-}Q_Y(h_1^\beta(\cdot,t))(0)\;\text{ and }\;
\frac{\partial}{\partial t} h_1^\beta(1,t){=}{-}\beta\mu e^t{=} {-}Q_Y(h_1^\beta(\cdot,t))(1),
\]
hold, where $Q_Y$ is the $Q$-matrix of $(Y(t))$ introduced in
Section~\ref{secmodel}. Consequently, the function $h_1^\beta$ is space-time harmonic for the
matrix $Q_Y$, equivalently $(f(Y(t),t))$ is a martingale. 

If $((Y_i(t)), 1\leq i\leq N)$ are $N$ i.i.d. processes with the same distribution as
$(Y(t))$, then, by using the independence of the processes $(Y_i(t))$, $i=1,\ldots,N$ and
Equation~\eqref{engdef}, one gets that the process 
\[
\left(\prod_{i=1}^N f(Y_i(t),t)\right)\stackrel{\text{dist.}}{=}\left(M_N^\beta(t)\right)
=\left(h_N^\beta(E_N(t),t)\right)
\]
is also martingale with respect to the filtration $({\cal F}_t){=}(\sigma{<}Y_i(s), s\leq t,
1\leq i\leq N{>})$. In particular the function $h_N^\beta$ is space-time harmonic with respect
to $Q_{E_N}$. The proposition is proved. 
\end{proof}
\subsection*{Martingales Associated to Orthogonal Polynomials}
As remarked by Karlin and McGregor~\cite{Karlin:02,Karlin}, the polynomials associated to the
Ehrenfest process are the $N+1$ {\em Krawtchouk polynomials} $({\cal K}_n^N,0\leq n\leq
N)$ defined by 
\begin{equation}\label{Kraw}
{\cal K}_n^N(x)=\binom{N}{n}^{-1}\sum_{\ell=0}^n (-1)^\ell
\binom{x}{\ell}\binom{N-x}{n-\ell}\left(\frac{\mu}{\nu}\right)^\ell,\quad  0\leq
n,x\leq N.
\end{equation}
These polynomials are orthogonal with respect to the binomial distribution
$$\left(\binom{N}{k}\nu^k\mu^{N-k},0\leq k\leq N\right).$$  The 
classical identity, see Karlin and McGregor~\cite{Karlin:02} for example, 
\begin{equation}\label{Gen}
\sum_{\ell=0}^N
\binom{N}{\ell}{\cal K}_n^N(x)u^\ell=(1+u)^{N-x}\left(1-\frac{\mu}{\nu}u\right)^x, \quad u\in\R,
\end{equation}
and the above proposition give that 
\[
\left(\sum_{n=0}^N \binom{N}{\ell}{\cal K}_n^N(E_N(t))\beta^n e^{n t}\right)=\left(M_N^{\beta/\nu}(t)\right)
\]
is a martingale. As a consequence one gets that, for any $0\leq n\leq N$, the process
$\left({\cal K}_n^N(E_N(t))\exp(nt)\right)$
is a martingale. The martingale $(M_N^\beta(t))$ can  thus be seen as an encoding of these
$(N+1)$ martingales in  the same way as the exponential martingale  of the Brownian motion
with  the Hermite  polynomials,  or the  martingale~\eqref{PC}  with the  Poisson-Charlier
polynomials. See Robert~\cite{Robert:08}.

Note that  the space  variable $E_N(t)$  and the time  variable $t$  are not  separated in
Expression~\eqref{expmart} of  the exponential martingale.  Provided that it can  be used,
Doob's optional stopping theorem applied to  some hitting time of some specified state $x$
does not  give useful information  on the distribution  of this variable.  But  given that
there  is  a  free  parameter  $\beta\in\R$ in  Expression~\eqref{expmart}  and  that  the
martingale property is  clearly preserved by integration with respect  to $\beta$, one may
try to find a  measure on $\R_+$ that will ``separate'' the  space and time variables. The
following proposition uses such a method.
\begin{proposition}\label{prop:IJalpha}
For any $\alpha>0$ and $t\geq 0$,  if
\begin{equation} \label{eq:IJ}
\begin{cases}
\displaystyle I_\alpha^N(t) = e^{-\alpha t} \int_0^1 (1-u)^{E_N(t)} \left(1+\frac{\nu}{\mu}u\right)^{N-E_N(t)}\hspace{-4mm} u^{\alpha-1}\,\diff u,\\
\displaystyle J_\alpha^N(t)  = e^{-\alpha t} \int_0^1 (1-u)^{N-E_N(t)} \left(1+\frac{\mu}{\nu}u\right)^{E_N(t)} u^{\alpha-1}\,\diff u,
\end{cases}
\end{equation}
and 
\[
T_x^{E_N} = \inf\{ t> 0: E_N(t) =x \}, \quad 0\leq x\leq N,
\]
then 
$(I_\alpha(t\wedge T_0^{E_N}))$ and $(J_\alpha(t \wedge T_N^{E_N}))$ are martingales.
\end{proposition}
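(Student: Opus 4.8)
The plan is to represent $I_\alpha^N$ and $J_\alpha^N$ as integrals, over the free parameter $\beta$, of the exponential martingales $(M_N^\beta)$ of Proposition~\ref{prop:hbeta}, and then to push the martingale property through Fubini's theorem. Since $\mu+\nu=1$ one has $1+\nu/\mu=1/\mu$, so the change of variables $u=\beta\mu e^{r}$ turns $M_N^\beta(r)=(1-\beta\mu e^{r})^{E_N(r)}(1+\beta\nu e^{r})^{N-E_N(r)}$ into $(1-u)^{E_N(r)}(1+(\nu/\mu)u)^{N-E_N(r)}$, and an elementary computation gives, for every $r\geq 0$,
\[
I_\alpha^N(r)=\mu^{\alpha}\int_0^{e^{-r}/\mu}M_N^\beta(r)\,\beta^{\alpha-1}\,\diff\beta .
\]
Thus $I_\alpha^N$ is obtained by integrating the $(M_N^\beta)$ against the fixed density $\beta^{\alpha-1}$, with the one nuisance that the admissible range of $\beta$ depends on $r$.

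This $r$-dependence of the range is precisely where the stopping time $T_0^{E_N}$ enters, and I expect it to be the only genuinely delicate point. For $\beta\in(0,1/\mu)$ let $\tau_\beta=\ln(1/(\beta\mu))>0$ be the deterministic time at which $1-\beta\mu e^{r}$ vanishes, so that $\beta<e^{-r}/\mu$ is the same as $r<\tau_\beta$. I would then check, pathwise, the identity
\[
I_\alpha^N\!\bigl(t\wedge T_0^{E_N}\bigr)=\mu^{\alpha}\int_0^{1/\mu}M_N^\beta\!\bigl(t\wedge\tau_\beta\wedge T_0^{E_N}\bigr)\,\beta^{\alpha-1}\,\diff\beta ,\qquad t\geq 0 .
\]
The verification splits on $\{t<T_0^{E_N}\}$ and $\{T_0^{E_N}\leq t\}$: on the first event the values $\beta$ with $\tau_\beta>t$ reproduce $I_\alpha^N(t)$ via the previous display, while for $\beta$ with $\tau_\beta\leq t<T_0^{E_N}$ one has $M_N^\beta(\tau_\beta)=0$ because $E_N(\tau_\beta)\geq 1$; on the second event only the $\beta$ with $\tau_\beta\geq T_0^{E_N}$ survive, $M_N^\beta$ being stopped at $T_0^{E_N}$ where $E_N=0$, and one applies the change of variables once more at the random time $T_0^{E_N}$. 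The point is that stopping $(M_N^\beta)$ additionally at $T_0^{E_N}$ is exactly what annihilates the spurious contribution of the $\beta$'s with $\tau_\beta<t$, since $M_N^\beta$ vanishes at $\tau_\beta$ precisely on $\{E_N(\tau_\beta)\geq 1\}$.

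With this identity in hand the conclusion is routine. For each $\beta\in(0,1/\mu)$ the process $(M_N^\beta(r))$ is a martingale by Proposition~\ref{prop:hbeta}, hence so is its stopping at the stopping time $\tau_\beta\wedge T_0^{E_N}$; and it is uniformly bounded, $0\leq M_N^\beta(r\wedge\tau_\beta\wedge T_0^{E_N})\leq\mu^{-N}$, because on $\{r\leq\tau_\beta\}$ one has $1-\beta\mu e^{r}\in[0,1]$ and $1<1+\beta\nu e^{r}\leq1/\mu$. The integrand above is therefore dominated by $\mu^{-N}\beta^{\alpha-1}$, integrable on $(0,1/\mu)$, so Fubini's theorem allows me to interchange $\E(\,\cdot\mid\mathcal{F}_s)$ with the $\beta$-integral; combining this with the martingale property of each $(M_N^\beta(\,\cdot\wedge\tau_\beta\wedge T_0^{E_N}))$ and reading the identity backwards at time $s$ yields $\E(I_\alpha^N(t\wedge T_0^{E_N})\mid\mathcal{F}_s)=I_\alpha^N(s\wedge T_0^{E_N})$ for $s\leq t$. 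The statement for $J_\alpha^N$ then follows for free: $(J_\alpha^N(t))$ is the very same functional attached to $\widetilde E_N(t)=N-E_N(t)$, which is again an Ehrenfest process but with the rates $\mu$ and $\nu$ interchanged and for which $T_0^{\widetilde E_N}=T_N^{E_N}$, so $(J_\alpha^N(t\wedge T_N^{E_N}))$ is a martingale by what precedes. (A more computational alternative, avoiding $\beta$, is to check that $g(x,t)=e^{-\alpha t}\int_0^1(1-u)^{x}(1+(\nu/\mu)u)^{N-x}u^{\alpha-1}\,\diff u$ is space-time harmonic for $Q_{E_N}$ at each $x\in\{1,\dots,N\}$, which boils down to $\int_0^1\tfrac{\diff}{\diff u}\bigl[(1-u)^{x}(1+(\nu/\mu)u)^{N-x}u^{\alpha}\bigr]\,\diff u=0$ --- legitimate since $x\geq 1$ and $\alpha>0$ kill the two boundary terms --- whereas at $x=0$ a boundary term of size $\mu^{-N}$ remains; since $g$ is bounded, Dynkin's formula then gives that the process stopped at $T_0^{E_N}$ is a martingale.)
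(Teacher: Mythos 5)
Your proof is correct and rests on the same idea as the paper's: representing $I_\alpha^N(t)$ as $\mu^{\alpha}\int_0^{e^{-t}/\mu}M_N^\beta(t)\,\beta^{\alpha-1}\,\diff\beta$ and exploiting the fact that the contribution at the moving endpoint $\beta=e^{-t}/\mu$ vanishes unless $E_N(t)=0$, which is exactly why the process must be stopped at $T_0^{E_N}$. The paper extracts the martingale property by differentiating under the integral sign and checking that $f_N(x,t)=\int_0^{e^{-t}/\mu}h_N^\beta(x,t)\beta^{\alpha-1}\,\diff\beta$ is space-time harmonic for $x\neq 0$ (your parenthetical alternative is essentially this computation carried out in the $u$ variable), whereas you obtain it via a pathwise stopping identity combined with conditional Fubini; both routes are valid, yours being somewhat more explicit about the optional-stopping bookkeeping.
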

\begin{proof}
Since $(N-E_N(t))$ is also an Ehrenfest process but with the two parameters $\mu$ and $\nu$
exchanged, one needs only to prove that the process $(I_\alpha(t\wedge T_0^{E_N}))$ is a martingale. 

Define
\[
f_N(x,t) \stackrel{\text{def.}}{=} \int_0^{e^{-t}/\mu} h_N^\beta(x,t)\beta^{\alpha-1}\diff \beta,
\]
where $h_N^\beta$ is defined by Equation~\eqref{hn}, then
\[
\frac{\partial f_N}{\partial t}(x,t)= 
\int_0^{e^{-t}/\mu} \frac{\partial h_N^\beta}{\partial t}(x,t)\beta^{\alpha-1}\diff \beta
-\frac{e^{-t}}{\mu}h_N^{e^{-t}/\mu}(x,t).
\]
Note that the last term of the above expression is $0$ if $x\not=0$. Consequently, for
$x\not=0$ and $t\geq 0$,
\[
\frac{\partial f_N}{\partial t}(x,t)+Q_{E_N}(f_N)(x,t)=
\int_0^{e^{-t}/\mu} \left[\frac{\partial h_N^\beta}{\partial t}(x,t)+Q_{E_N}(h_N^\beta)(x,t)\right]\beta^{\alpha-1}\diff \beta=0,
\]
because $h_N^\beta$ is space-time harmonic with respect to $Q_{E_N}$ as it has been seen
in the proof of Proposition~\ref{prop:hbeta}. In other words, the
function $f_N$ is space-time harmonic for the $Q$-matrix of the stopped process $(E_N(t\wedge T_0^{E_N}))$, hence 
\[
\left(f_N\left(E_N\left(t\wedge T_0^{E_N}\right),t\wedge T_0^{E_N}\right)\right)=\left(I_\alpha^N\left(t\wedge T_0^{E_N}\right)\right)
\] is a
martingale. The proposition is proved.
\end{proof}
It is now easy to get a representation of the Laplace transform of the hitting times for
the Ehrenfest process.
\begin{proposition}[Laplace Transform of Hitting Times]\label{prop:laplace}
For $0\leq x \leq y \leq N$ and if  $T_x^{E_N} = \inf\{ t> 0: E_N(t) =x \}$, 
the relations
\begin{equation}\label{eq:laplace-c}
\E_y\left( e^{-\alpha T_x^{E_N}} \right) = \frac{B_x(\alpha)}{B_y(\alpha)},
\quad\text{ and }\quad
\E_x\left( e^{-\alpha T_y^{E_N}} \right) = \frac{D_x(\alpha)}{D_y(\alpha)}
\end{equation}
hold, with
\begin{equation}\label{eq:BD}
\begin{cases}
\displaystyle B_x(\alpha) = \int_0^1 (1-u)^x \left(1+\frac{\nu}{\mu}u\right)^{N-x} \hspace{-3mm}u^{\alpha-1}\diff u,\\
\displaystyle D_x(\alpha) = \int_0^1 (1-u)^{N-x} \left(1+\frac{\mu}{\nu}u\right)^x u^{\alpha-1}\diff u.
\end{cases}
\end{equation}
\end{proposition}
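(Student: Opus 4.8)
The plan is to read off both identities from Doob's optional stopping theorem, applied to the stopped martingales constructed in Proposition~\ref{prop:IJalpha}, after evaluating them at time $0$ and at the relevant hitting time. First I would dispose of the second identity by a symmetry argument: since $(N-E_N(t))$ is again an Ehrenfest process, with the roles of $\mu$ and $\nu$ interchanged, and since under this substitution $J_\alpha^N$ turns into $I_\alpha^N$ while $D$ and $B$ are exchanged up to the reflection $z\mapsto N-z$, and the hitting time $T_y^{E_N}$ started from $x$ corresponds to the hitting time of the lower state $N-y$ started from $N-x$, the second identity follows from the first. So it is enough to treat the first identity, for $0\le x<y\le N$.

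Fix such $x<y$ and work under $\P_y$. The process I would use is $(I_\alpha^N(t\wedge T_x^{E_N}))$. Since $(E_N(t))$ is a birth-and-death process, a path started at $y\ge x$ cannot reach $0$ without first visiting $x$, so $T_x^{E_N}\le T_0^{E_N}$ almost surely; hence $(I_\alpha^N(t\wedge T_x^{E_N}))$ is just the martingale $(I_\alpha^N(t\wedge T_0^{E_N}))$ of Proposition~\ref{prop:IJalpha} stopped at the additional stopping time $T_x^{E_N}$, and is therefore still a martingale. Evaluating the defining integrals \eqref{eq:IJ}: at $t=0$ one has $E_N(0)=y$, so $I_\alpha^N(0)=B_y(\alpha)$ with $B_y$ as in \eqref{eq:BD}; and on $\{T_x^{E_N}<\infty\}$ one has $E_N(T_x^{E_N})=x$, so $I_\alpha^N(T_x^{E_N})=e^{-\alpha T_x^{E_N}}B_x(\alpha)$. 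To license the optional stopping step, note that for $u\in[0,1]$ one has $0\le(1-u)^{E_N(t)}\le1$ and $1\le(1+(\nu/\mu)u)^{N-E_N(t)}\le(1+\nu/\mu)^N$, while $\int_0^1 u^{\alpha-1}\,\diff u=1/\alpha<\infty$ because $\alpha>0$; hence $0\le I_\alpha^N(t)\le(1+\nu/\mu)^N/\alpha$ uniformly in $t$, so the stopped martingale is bounded, and since $(E_N(t))$ is irreducible and positive recurrent on the finite set $\{0,\dots,N\}$ one has $T_x^{E_N}<\infty$ almost surely. The optional stopping theorem then gives $\E_y(I_\alpha^N(T_x^{E_N}))=I_\alpha^N(0)$, that is $B_x(\alpha)\,\E_y(e^{-\alpha T_x^{E_N}})=B_y(\alpha)$, which is the first identity in \eqref{eq:laplace-c}; the second then follows from the symmetry above (with $D$ in place of $B$).

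The only point requiring genuine care is the assertion that $(I_\alpha^N(t\wedge T_x^{E_N}))$ is a martingale: this is where the birth-and-death structure is used, through the observation $T_x^{E_N}\le T_0^{E_N}$ under $\P_y$ with $y>x$, so that one is merely further-stopping the martingale already produced in Proposition~\ref{prop:IJalpha} rather than re-checking space-time harmonicity; together with the uniform bound on $I_\alpha^N$ that is needed to apply optional stopping at the almost surely finite time $T_x^{E_N}$. Everything else is direct substitution into \eqref{eq:IJ} and \eqref{eq:BD}. The degenerate case $x=y$, in which a path started at $y$ may reach $0$ before returning to $y$ so that the further-stopping argument breaks down, would need a short separate remark, but it is not needed for the stated conclusions.
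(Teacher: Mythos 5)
Your proof is correct and takes essentially the same route as the paper: reduce the second identity to the first via the $E_N\mapsto N-E_N$, $\mu\leftrightarrow\nu$ symmetry, then apply optional stopping to the bounded martingale of Proposition~\ref{prop:IJalpha}. Your added observations --- that under $\P_y$ with $y\ge x$ one has $T_x^{E_N}\le T_0^{E_N}$, so stopping at $T_x^{E_N}$ only further stops an existing martingale, together with the explicit uniform bound $I_\alpha^N(t)\le (1+\nu/\mu)^N/\alpha$ and the a.s.\ finiteness of $T_x^{E_N}$ --- are exactly the details the paper's one-line proof leaves implicit, and they are all correct.

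One small but real discrepancy: the relation you derive, $B_x(\alpha)\,\E_y(e^{-\alpha T_x^{E_N}})=B_y(\alpha)$, gives $\E_y(e^{-\alpha T_x^{E_N}})=B_y(\alpha)/B_x(\alpha)$, which is the \emph{reciprocal} of the first identity as printed in \eqref{eq:laplace-c}. Your version is the right one: $x\mapsto B_x(\alpha)$ is decreasing (both factors of the integrand decrease in $x$ for $u\in[0,1]$), so the printed ratio $B_x/B_y$ would exceed $1$ for $x<y$, which is impossible for a Laplace transform; moreover the paper's own displayed optional-stopping identity $\E_y(I_\alpha(0))=\E_y(I_\alpha(T_x))$ and all subsequent uses (e.g.\ the limit $B_{C_N}(\alpha_N)/B_0(\alpha_N)$ in Proposition~\ref{prop:empty}) follow the ``value at starting point over value at target'' convention you obtain. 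So the statement contains a typo, and you should not assert that your formula literally ``is'' the printed first identity --- rather, it is its corrected form, consistent with the (correctly printed) second identity under your symmetry argument.
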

There is in fact only one result here since $(N-E_N(t))$ is, as it has already been remarked, 
an Ehrenfest process with the parameters $\mu$ and $\nu$ exchanged. The second
relation of~\eqref{eq:laplace-c} is therefore a consequence of the first one. 
\begin{proof}
The martingale $I_\alpha(t\wedge T_0)$ is bounded
and then uniformly integrable. Therefore, Doob's optional stopping Theorem gives the relation
\[
\E_y(I_\alpha(0)) =  \E_y(I_\alpha(T_x)),
\]
the first relation of~\eqref{eq:laplace-c} follows.
\end{proof}
By expanding one of the terms of the integrand of Equation~\eqref{eq:BD}, ones gets
\begin{align*}
B_x(\alpha)&=\sum_{k=0}^{N-x}\binom{N-x}{k}\left(\frac{\nu}{\mu}\right)^k
\int_0^1 (1-u)^x u^{\alpha+k-1}\,\diff u\\
&=\sum_{k=0}^{N-x}\binom{N-x}{k}\left(\frac{\nu}{\mu}\right)^k
\frac{\Gamma(x+1)\Gamma(\alpha+k)}{\Gamma(\alpha+x+k+1)},
\end{align*}
by  using the  relation between  Beta functions  and Gamma  functions.  See  Whittaker and
Watson~\cite{Whittaker:01} for example. Laplace  transforms of hitting times can therefore
also expressed as  ratio of such sums, it  is quite common for hitting times  of birth and
death  processes. See  Equation~(4.4) of  Karlin and  McGregor~\cite{Karlin}  for example.
Flajolet and Huillet~\cite{Flajolet} uses this  kind of representations in the symmetrical
case.  As it  will be seen, from the  compact representation~\eqref{eq:BD} with integrals,
one will get asymptotic results for the distribution of these variables with standard
techniques. 
\subsection{Martingales for the Engset Process}
It has been seen that the Engset  process $(X_N(t))$ is a reflected version of the process
$(E_N(t))$   at   the   boundary   $C_N$.     The   two   families   of   martingales   of
Proposition~\ref{prop:IJalpha} cannot  be used directly  if the sample path  of $(E_N(t))$
may exceed $C_N$,  when the hitting times of  $0$ is analyzed for example. The  idea is to
construct a linear combination of the martingales $(I_\alpha(t))$ and $(J_\alpha(t))$ such
that the  space-time harmonicity  of the  corresponding function which  is valid  when the
space variable in $\{1,\ldots, C_N-1\}$ holds also at the boundary $C_N$.  This method has
been used in Kennedy~\cite{Kennedy:02} in the case of reflected random walks.
\begin{proposition}\label{propmartref}
For $\alpha>0$, define
\begin{equation}
\begin{cases}
\displaystyle b_{N}(\alpha) = \nu\int_0^1 (1-u)^{C_N}\left(1+\frac{\nu}{\mu}u\right)^{N-{C_N}-1}\hspace{-6mm}u^{\alpha}\,\diff u,\\
\displaystyle d_{N}(\alpha) = \mu\int_0^1 (1-u)^{N-{C_N}-1}\left(1+\frac{\mu}{\nu}u\right)^{{C_N}}u^{\alpha}\,\diff u,
\end{cases}
\end{equation}
and 
\[
(K^N_\alpha(t))= (d_{N}(\alpha)I^N_\alpha(t)+b_{N}(\alpha)J^N_\alpha(t)),
\]
where $(I^N_\alpha(t))$ and $(J^N_\alpha(t))$ are defined by Equations~\eqref{eq:IJ} with
$E_N(t)$ replaced by $X_N(t)$ then,  if
$T_0^{X_N}$ is the hitting time of $0$ by $(X_N(t))$,  the process $(K^N_\alpha(t\wedge T_0^{X_N}))$ is a martingale. 
\end{proposition}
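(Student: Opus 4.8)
The plan is to exhibit a function $\kappa\colon\{0,\dots,C_N\}\times\R_+\to\R_+$ that is space-time harmonic for the $Q$-matrix of the process $(X_N(t))$ killed at $0$, and then to read $K^N_\alpha(t\wedge T_0^{X_N})=\kappa(X_N(t\wedge T_0^{X_N}),t\wedge T_0^{X_N})$. Regard $I^N_\alpha$ and $J^N_\alpha$, as in Proposition~\ref{prop:IJalpha}, as functions $I^N_\alpha(x,t)$ and $J^N_\alpha(x,t)$ of a space variable $x$ and a time variable $t$ (obtained from~\eqref{eq:IJ} with $X_N(t)$ replaced by $x$), and set $\kappa(x,t)=d_N(\alpha)I^N_\alpha(x,t)+b_N(\alpha)J^N_\alpha(x,t)$, so that $K^N_\alpha(t)=\kappa(X_N(t),t)$. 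From the proof of Proposition~\ref{prop:IJalpha} (and the symmetry $x\mapsto N-x$, $\mu\leftrightarrow\nu$ used there) one knows that $I^N_\alpha$ satisfies the space-time harmonic equation~\eqref{sptime} for $Q_{E_N}$ at every $x\neq 0$, that $J^N_\alpha$ satisfies it at every $x\neq N$, and that $I^N_\alpha$ carries a non-zero defect at $x=0$.

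First I would compare the two $Q$-matrices: $Q_{X_N}$ and $Q_{E_N}$ agree on $\{0,1,\dots,C_N-1\}$, while at the reflecting state $C_N$ one has, for every $h$,
\[
Q_{X_N}(h)(C_N)=Q_{E_N}(h)(C_N)-\nu(N-C_N)\bigl(h(C_N+1)-h(C_N)\bigr).
\]
Since $1\le x\le C_N-1$ forces $x\neq 0$ and $x\neq N$ (the statement implicitly assumes $C_N\le N-1$; for $C_N=N$ the process $X_N$ is just $E_N$), the function $\kappa$ already satisfies~\eqref{sptime} for $Q_{X_N}$ at every $x\in\{1,\dots,C_N-1\}$. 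At $x=C_N$ (still $\neq 0$ and $\neq N$) both $I^N_\alpha$ and $J^N_\alpha$ are $Q_{E_N}$-space-time harmonic, hence so is $\kappa$, and the displayed identity gives
\[
\frac{\partial}{\partial t}\kappa(C_N,t)+Q_{X_N}(\kappa)(C_N,t)=-\nu(N-C_N)\bigl(\kappa(C_N+1,t)-\kappa(C_N,t)\bigr).
\]
At $x=0$, where $Q_{X_N}$ and $Q_{E_N}$ coincide, only the defect of the $d_N(\alpha)I^N_\alpha$ term survives.

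The crux is therefore the identity $\kappa(C_N+1,t)=\kappa(C_N,t)$, which is exactly what the coefficients $b_N(\alpha)$ and $d_N(\alpha)$ are tailored to. I would establish it by a short telescoping computation on the integrands in~\eqref{eq:IJ}, using $\mu+\nu=1$, hence $(1-u)-(1+(\nu/\mu)u)=-u/\mu$, to get
\[
I^N_\alpha(C_N+1,t)-I^N_\alpha(C_N,t)=-\frac{e^{-\alpha t}}{\mu}\int_0^1(1-u)^{C_N}\Bigl(1+\frac{\nu}{\mu}u\Bigr)^{N-C_N-1}u^{\alpha}\,\diff u=-\frac{e^{-\alpha t}}{\mu\nu}\,b_N(\alpha),
\]
and symmetrically, using $(1+(\mu/\nu)u)-(1-u)=u/\nu$,
\[
J^N_\alpha(C_N+1,t)-J^N_\alpha(C_N,t)=\frac{e^{-\alpha t}}{\mu\nu}\,d_N(\alpha).
\]
Multiplying the first line by $d_N(\alpha)$, the second by $b_N(\alpha)$ and adding, the two contributions cancel, so $\kappa(C_N+1,t)=\kappa(C_N,t)$ and $\kappa$ is space-time harmonic for $Q_{X_N}$ at every $x\in\{1,\dots,C_N\}$.

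Finally I would conclude by the usual Dynkin decomposition of $(\kappa(X_N(t),t))$: its finite-variation part is $\int_0^t\bigl[\partial_s\kappa+Q_{X_N}(\kappa)\bigr](X_N(s),s)\,\diff s$, whose integrand vanishes unless $X_N(s)=0$, so stopping at $T_0^{X_N}$ removes it and leaves a local martingale; since $I^N_\alpha$ and $J^N_\alpha$ are bounded on $\{0,\dots,C_N\}\times\R_+$, the stopped process $(K^N_\alpha(t\wedge T_0^{X_N}))$ is bounded and hence a genuine martingale. The only mildly delicate point is the boundary bookkeeping at $C_N$: one must check that the $J^N_\alpha$ term introduces no extra defect there (its defect lies at $x=N>C_N$) and that the cancellation above is exact; everything else is routine.
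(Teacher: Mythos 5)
Your argument is correct and follows essentially the same route as the paper's proof: space-time harmonicity of the combination on the interior $\{1,\dots,C_N-1\}$, reduction of the boundary defect at $C_N$ to the identity $\kappa(C_N+1,t)=\kappa(C_N,t)$, and the telescoping computation (using $\mu+\nu=1$) showing that $d_N(\alpha)$ and $b_N(\alpha)$ are exactly the coefficients that make the two defects cancel. Your explicit bookkeeping of the defect at $x=0$ and the boundedness argument for uniform integrability only make the paper's argument more precise.
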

\begin{proof}
Define the function $g_{\alpha}$ such that, for all $t\geq 0$ and $0\leq x \leq {C_N}$,
\begin{multline*}
g_{\alpha}(x,t) =  d_{N}(\alpha)\, e^{-\alpha t} \int_0^1(1-u)^x\left(1+\frac{\nu}{\mu}u\right)^{N-x}\hspace{-5mm}u^{\alpha-1}\,\diff u\\
+ b_{N}(\alpha)\,e^{-\alpha t} \int_0^1(1-u)^{N-x}\left(1+\frac{\mu}{\nu}u\right)^x u^{\alpha-1}\,\diff u.
\end{multline*}
The function $g_{\alpha}$ is space-time harmonic for the matrix $Q_{X_N}$ on $\{1,\ldots,
C_N-1\}$, that is 
\[
\left[\frac{\partial g_{\alpha}}{\partial t} + Q_{X_N}(g_{\alpha})\right](x,t) = 0, \quad 0< x<{C_N},
\]
since the two matrices $Q_{X_N}$ and $Q_{E_N}$ are identical as long as the starting point
is in $\{1,\ldots, C_N-1\}$  and that $(I^N_\alpha(t\wedge T_0^{E_N}))$ and $(J^N_\alpha(t\wedge
T_N^{E_N}))$ are martingales by Proposition~\ref{prop:IJalpha}. 

The space-time harmonicity of $g_{\alpha}$ for the matrix $Q_{E_N}$ at  $C_N<N$ gives the relation
\[
\left[\frac{\partial g_{\alpha}}{\partial t} + Q_{X_N}(g_{\alpha})\right]({C_N},t) = -\nu(N-{C_N})\left[g_{\alpha}({C_N}+1,t)-g_{\alpha}({C_N},t)\right].
\]
For $0\leq y\leq 1$, one has 
\begin{multline*}
(1{-}y)^{{C_N}+1}\left(1{+}\frac{\nu}{\mu}y\right)^{N{-}C_N{-}1}\hspace{-12mm}{-}(1{-}y)^{{C_N}}\left(1{+}\frac{\nu}{\mu}y\right)^{N-{C_N}}\hspace{-8mm}=
{-}\frac{y}{\mu} (1{-}y)^{{C_N}}\left(1{+}\frac{\nu}{\mu}y\right)^{N{-}C_N{-}1},
\end{multline*}
and, with a similar identity, one gets the relation
\[
\left[\frac{\partial g_{\alpha}}{\partial t} + Q_{X_N}(g_{\alpha})\right](C_N,t) = 
\frac{(N-{C_N})}{\mu}\left(d_N(\alpha)b_N(\alpha)-b_N(\alpha)d_N(\alpha)\right)=0.
\]
The function $g_{\alpha}$ is space-time harmonic for the $Q$-matrix of the stopped process 
$(X_N(t\wedge T_0^{X_N}))$, the process $(K^N_\alpha(t\wedge T_0^{X_N}))$ is therefore a martingale. 
\end{proof}

\begin{proposition}[Laplace Transform of Hitting Times for Engset Process]\label{prop:laplace-bis}
For $0\leq x\leq y\leq C_N$, if 
$T_x^{X_N}=\inf\{s\geq 0: X_N(s)=x\}$,
then, for $\alpha\geq 0$, 
\begin{equation}\label{eq:laplace-0-bis}
\E_x\left( e^{-\alpha T_y^{X_N}} \right) = \frac{D_x(\alpha)}{D_y(\alpha)} \text{ and }\E_y\left(e^{-\alpha T_x^{X_N}}\right) = \frac{d_{N}(\alpha)B_y(\alpha)+b_{N}(\alpha)D_y(\alpha)}{d_{N}(\alpha)B_x(\alpha)+b_{N}(\alpha)D_x(\alpha)},
\end{equation}
with the notations of Propositions~\ref{prop:laplace} and~\ref{propmartref}.
\end{proposition}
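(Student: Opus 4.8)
The plan is to prove the two identities of~\eqref{eq:laplace-0-bis} separately. The first is a direct consequence of Proposition~\ref{prop:laplace}, once one observes that the reflecting boundary at $C_N$ is invisible to an upward hitting time below it; the second follows from Doob's optional stopping theorem applied to the martingale $(K^N_\alpha(t\wedge T_0^{X_N}))$ of Proposition~\ref{propmartref}. I would only treat $\alpha>0$, the case $\alpha=0$ being trivial: all hitting times are a.s.\ finite since $(X_N(t))$ is ergodic on the finite set $\{0,\ldots,C_N\}$, so both sides of each identity equal $1$.

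For the first identity, fix $x\leq y\leq C_N$ and start from $x$. Before reaching $y$ the birth and death process $(X_N(t))$ stays in $\{0,\ldots,y-1\}\subseteq\{0,\ldots,C_N-1\}$, a set on which $Q_{X_N}$ and $Q_{E_N}$ coincide by~\eqref{QmatEhr} and~\eqref{Qmat}; the same is true of $(E_N(t))$ before $T_y^{E_N}$, so $T_y^{X_N}$ and $T_y^{E_N}$ have the same law under $\P_x$. The second relation of~\eqref{eq:laplace-c} then gives $\E_x(e^{-\alpha T_y^{X_N}})=\E_x(e^{-\alpha T_y^{E_N}})=D_x(\alpha)/D_y(\alpha)$.

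For the second identity, I would first note that comparing~\eqref{eq:IJ} (with $E_N$ replaced by $X_N$) with~\eqref{eq:BD} gives $I^N_\alpha(t)=e^{-\alpha t}B_{X_N(t)}(\alpha)$ and $J^N_\alpha(t)=e^{-\alpha t}D_{X_N(t)}(\alpha)$, hence
\[
K^N_\alpha(t)=e^{-\alpha t}\bigl(d_{N}(\alpha)B_{X_N(t)}(\alpha)+b_{N}(\alpha)D_{X_N(t)}(\alpha)\bigr).
\]
Now fix $x\leq y\leq C_N$ and work under $\P_y$. As $(X_N(t))$ is a birth and death process started at $y\geq x$, it reaches $x$ no later than $0$, i.e.\ $T_x^{X_N}\leq T_0^{X_N}$ a.s., so $(K^N_\alpha(t\wedge T_x^{X_N}))$ is the martingale of Proposition~\ref{propmartref} stopped at the additional stopping time $T_x^{X_N}$ and is therefore again a martingale. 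It is bounded ($e^{-\alpha t}\leq 1$, and $z\mapsto d_{N}(\alpha)B_z(\alpha)+b_{N}(\alpha)D_z(\alpha)$ takes finitely many finite values on $\{0,\ldots,C_N\}$ since the integrals converge for $\alpha>0$), hence uniformly integrable, and since $T_x^{X_N}<\infty$ a.s.\ with $X_N(T_x^{X_N})=x$, letting $t\to\infty$ in the martingale identity gives
\[
d_{N}(\alpha)B_y(\alpha)+b_{N}(\alpha)D_y(\alpha)=K^N_\alpha(0)=\E_y\bigl(K^N_\alpha(T_x^{X_N})\bigr)=\E_y\bigl(e^{-\alpha T_x^{X_N}}\bigr)\bigl(d_{N}(\alpha)B_x(\alpha)+b_{N}(\alpha)D_x(\alpha)\bigr),
\]
after pulling the deterministic factor out of the expectation. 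Dividing yields the second relation of~\eqref{eq:laplace-0-bis}.

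The argument involves no hard estimate. The one point that genuinely needs the structure of the problem is that one may stop the martingale at $T_x^{X_N}$ rather than at $T_0^{X_N}$ --- which works because a birth and death process started above $x$ hits $x$ before $0$ --- together with the (immediate) boundedness that makes optional stopping applicable; I do not expect any real obstacle beyond this.
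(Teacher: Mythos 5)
Your proof is correct and follows essentially the same route as the paper: the first identity by coupling $(X_N(t))$ with $(E_N(t))$ up to the first visit to $C_N$ and invoking Proposition~\ref{prop:laplace}, the second by optional stopping of the martingale $(K^N_\alpha(t\wedge T_0^{X_N}))$ of Proposition~\ref{propmartref}. You merely supply details the paper leaves implicit (the skip-free property giving $T_x^{X_N}\leq T_0^{X_N}$ under $\P_y$ for $y\geq x$, and the boundedness justifying uniform integrability), which is all to the good.
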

\begin{proof}
The first identity comes from the fact that the two processes $(X_N(t))$ and $(E_N(t))$
starting from the same initial state are identical in distribution as long as they do not
reach $C_N$. In particular, if $X_N(0)=E_N(0)=x$, the variables $T_y^{E_N}$ and $T_{y}^{X_N}$
have the same distribution. The second identity is a direct consequence of the martingale property of
$(K^N_\alpha(t\wedge T_0^{X_N}))$ proved in the above proposition. 
\end{proof}

\section{A Fluid Picture}
This section gives a quick description of the first order properties of the
Ehrenfest and Engset processes as $N$ goes to infinity. Its purpose is mainly to introduce
the three natural possible asymptotic regimes that will be investigated in detail in the
last sections.  The proofs of the asymptotic results are quite standard and therefore will be skipped. 

From now on, it is assumed that Relation~\eqref{rho} holds, that is
\[
\lim_{N\to+\infty}{C_N}/{N}=\eta>0.
\]
The Engset process $(X_N(t))$ can also be seen as the unique solution of the following stochastic
differential equation
\[
\diff X_N(t)=\ind{X_N(t-)<C_N} \sum_{i=1}^{N-X_N(t-)} \Poi_{\nu,i}(\diff t)-\sum_{i=1}^{X_N(t-)}
\Poi_{\mu,i}(\diff t),
\]
starting from $X_N(0)$, where, for $\xi>0$, $(\Poi_{\xi,k})$ denotes an i.i.d. sequence of
Poisson processes with rate $\xi$. The different Poisson processes used are independent.  

The initial state is assumed to satisfy
\[
\lim_{N\to+\infty}{X_N(0)}/{N}=x_0\in [0,\eta],
\]
then, by complementing Poisson processes in order to get martingales, the above equation
can be rewritten as 
\[
\diff X_N(t)= \diff M_N(t)+ [\nu(N-X_N(t))\ind{X_N(t)<C_N}-\mu X_N(t)]\diff t,
\]
where $(M_N(t))$ is a martingale of the order of $\sqrt{N}$. In the same way as for the Erlang
process, see Chapter~6 of Robert~\cite{Robert:08} for example, one can prove the following
convergence in distribution of processes
\[
\lim_{N\to+\infty} \left({X_N(t)}/{N}\right)=\left(\min(\eta, \nu+(x_0-\nu)e^{-t})\right).
\]
This first order description of the Engset process shows that there are three different
asymptotic regimes.
\begin{itemize}
\item {\em Super-Critical Regime:} $\nu>\eta$.\\
Under this condition the renormalized process is at the boundary $C_N$ at time
\begin{equation}\label{tstar}
t^*\stackrel{\text{def.}}{=}\log\left({(\nu-x_0)}/{(\nu-\eta)}\right).
\end{equation}
A more detailed picture can  be obtained by looking at the process 
\[
(Z_N(t))=(C_N-X_N(t/N))
\]
of empty spaces with a ``slow'' time scale. As $N$ goes to infinity, is is easily seen
that the $Q$-matrix of this birth and death process converges to the $Q$-matrix of an
ergodic $M/M/1$ process with input rate $\eta$ and service rate $\nu$. In particular, this
gives the asymptotic expression of the Engset formula, for $t\in\R_+$,
\[
\lim_{N\to+\infty} \P(X_N(t)=C_N)= 1-{\eta}/{\nu}.
\]
\item {\em Sub-Critical Regime:} $\nu<\eta$.\\
In this case, one has in fact
\[
\lim_{N\to+\infty} \left({X_N(t)}/{N}\right)
=\left(\nu+(x_0-\nu)e^{-t}\right)=\lim_{N\to+\infty} \left({E_N(t)}/{N}\right).
\]
As expected, the boundary at $C_N$ does not play a role, for the first order the Engset
process and the Ehrenfest process are identical.
\item {\em Critical Regime:} $\nu=\eta$.\\
The fluid limit picture gives that the system saturates ``at infinity'' which is a too
rough description of its evolution as it will be seen. 
\end{itemize}

The next  sections are devoted  to the asymptotic  analysis of the distributions  of hitting
times.   For the  sake of  simplicity, it  is assumed  that the  initial state  is  on the
boundary, either  $0$ or $C_N$. Similar  results could be obtained  without any additional
difficulty when the  initial state is in the neighborhood of  some $\lfloor z N\rfloor$ for
$0\leq z\leq \eta$.

\section{Super-Critical Regime}\label{supsec}
As it has been seen, under the condition $\nu>\eta$ and at time $t^*$ defined by Equation~\eqref{tstar},
the system is saturated for the fluid limit. It implies in particular that the hitting
time of the boundary $C_N$,
\[
T_{C_N}^{X_N}=\inf\{s\geq 0; X_N(s)=C_N\}
\]
converges in distribution to $t^*$. The following proposition gives a more precise
asymptotic result. See Theorem~3 of Flajolet and Huillet~\cite{Flajolet} for a related result in the
symmetrical case. 
\begin{proposition}\label{supprop}
If $C_N=\eta N+O(1)$, $\eta<\nu$ and $X_N(0)=0$, then the sequence of random variables
\[
\left(\sqrt{N}\left[T_{C_N}^{X_N}-\log\left({\nu}/{(\nu-\eta)}\right)\rule{0mm}{4mm}\right]\right)
\]
converges in distribution to a centered normal random variable with variance
\[
{\sqrt{\eta(1-\eta)}}/{(\nu-\eta)}
\]
\end{proposition}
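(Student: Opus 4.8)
I would read the result off the exact Laplace-transform formula of Proposition~\ref{prop:laplace-bis}. Its first identity in~\eqref{eq:laplace-0-bis}, taken with $x=0$ and $y=C_N$, gives, for $\alpha\geq 0$,
\[
\E_0\!\left(e^{-\alpha T_{C_N}^{X_N}}\right)=\frac{D_0(\alpha)}{D_{C_N}(\alpha)},\qquad
D_x(\alpha)=\int_0^1(1-u)^{N-x}\Bigl(1+\tfrac{\mu}{\nu}u\Bigr)^{x}u^{\alpha-1}\,\diff u,
\]
so that, writing $t^{*}=\log\bigl(\nu/(\nu-\eta)\bigr)$ for the value of~\eqref{tstar} at $x_{0}=0$ (the centring constant of the statement), the Laplace transform of the rescaled variable is
\[
\E_0\!\left(\exp\!\bigl(-\lambda\sqrt N\,[\,T_{C_N}^{X_N}-t^{*}\,]\bigr)\right)
=e^{\lambda\sqrt N\,t^{*}}\,\frac{D_0(\lambda\sqrt N)}{D_{C_N}(\lambda\sqrt N)},\qquad\lambda>0 .
\]
The plan is to show this converges, as $N\to\infty$, to $e^{\lambda^{2}\sigma^{2}/2}$ with $\sigma^{2}=\eta(1-\eta)/(\nu-\eta)^{2}$, the bilateral Laplace transform of the centred Gaussian of variance $\sigma^2$ (equivalently standard deviation $\sqrt{\eta(1-\eta)}/(\nu-\eta)$, as in the statement). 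The whole problem thus reduces to the asymptotics of the two Beta-type integrals $D_0(\lambda\sqrt N)$ and $D_{C_N}(\lambda\sqrt N)$.

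The core estimate is a Laplace (saddle-point) analysis of these integrals. Since $C_N=\eta N+O(1)$, on the relevant range $u=O(1/\sqrt N)$ the integrand of $D_{C_N}(\lambda\sqrt N)$ is $e^{N\varphi(u)}(1+O(u))\,u^{\lambda\sqrt N-1}$ with $\varphi(u)=(1-\eta)\log(1-u)+\eta\log(1+\tfrac{\mu}{\nu}u)$, and $D_0(\lambda\sqrt N)$ is the special case $\eta=0$, namely $D_0(\alpha)=B(\alpha,N+1)$, for which Stirling's formula already suffices but the same analysis applies. The point specific to the super-critical regime $\nu>\eta$ is that $\varphi'(0)=\eta\mu/\nu-(1-\eta)=(\eta-\nu)/\nu<0$ (using $\mu+\nu=1$); indeed $\varphi'(u)=0$ forces $\mu u=\eta-\nu<0$, so $\varphi$ is strictly decreasing on $[0,1]$ with maximum $\varphi(0)=0$. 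Hence each integrand concentrates near $u=0$, at the scale $u=w/\sqrt N$ fixed by the balance between $e^{N\varphi(u)}$ and $u^{\lambda\sqrt N}$, and a routine bound discards the contribution of $u$ bounded away from $0$. Substituting $u=w/\sqrt N$ and expanding,
\[
N\varphi(w/\sqrt N)+\lambda\sqrt N\log(w/\sqrt N)=-\tfrac12\lambda\sqrt N\log N+\sqrt N\bigl(\lambda\log w+\varphi'(0)w\bigr)+\tfrac12\varphi''(0)w^{2}+o(1),
\]
uniformly on compact sets of $w$; Laplace's method with large parameter $\sqrt N$ then applies around the critical point $w^{*}=-\lambda/\varphi'(0)$, which is $\lambda$ for $D_0$ and $\lambda\nu/(\nu-\eta)$ for $D_{C_N}$, and produces expressions of the form $N^{-\lambda\sqrt N/2}\,e^{\sqrt N\Phi(w^{*})}\,c_N$ with $\Phi(w)=\lambda\log w+\varphi'(0)w$ and an explicit $O(N^{-1/4})$ prefactor $c_N$ built from $\varphi''(0)$, $w^{*}$ and $\Phi''(w^{*})$.

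Finally I would form the ratio. The $N^{-\lambda\sqrt N/2}$ factors cancel; since $\Phi(w^{*})$ equals $\lambda\log\lambda-\lambda$ for $D_0$ and $\lambda\log\lambda-\lambda+\lambda t^{*}$ for $D_{C_N}$, the $e^{\sqrt N\Phi(w^{*})}$ terms leave exactly $e^{-\lambda\sqrt N t^{*}}$, cancelled by the prefactor $e^{\lambda\sqrt N t^{*}}$; and the algebraic prefactors $c_N$ (powers of $\lambda$, the $\sqrt{2\pi}$'s, the $N^{-1/4}$) cancel against one another, so that only the quotient of the two second-order correction factors $e^{\varphi''(0)(w^{*})^{2}/2}$ survives. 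An elementary computation with $\varphi''(0)=-(1-\eta)-\eta\mu^{2}/\nu^{2}$, $w^{*}=\lambda\nu/(\nu-\eta)$ and again $\mu+\nu=1$ evaluates this surviving factor to $\exp\bigl(\lambda^{2}\eta(1-\eta)/(2(\nu-\eta)^{2})\bigr)$, the desired limit. To pass from convergence of Laplace transforms to convergence in distribution I would note that the same estimates hold for $\lambda$ in a complex neighbourhood of $0$ with $\Re\lambda>0$ (the saddle $w^{*}$ then lying in the right half-plane) and invoke analyticity together with the continuity theorem; alternatively the bound $\P\bigl(\sqrt N[T_{C_N}^{X_N}-t^{*}]<-M\bigr)\leq e^{-\lambda M}\,\E_0(e^{-\lambda\sqrt N[T_{C_N}^{X_N}-t^{*}]})$ together with a crude right-tail estimate yields tightness. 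The main obstacle is precisely the uniformity in the Laplace estimates: controlling the integral outside the $O(1/\sqrt N)$ window, and checking that the cubic and higher terms in the Taylor expansion of $\varphi$ at $0$ are genuinely $o(1)$ on the relevant scale, uniformly as $\lambda$ ranges over a fixed compact set.
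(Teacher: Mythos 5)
Your proposal is correct and follows essentially the same route as the paper: the paper likewise starts from the exact ratio $D_0(\alpha\sqrt N)/D_{C_N}(\alpha\sqrt N)$, rescales $u\mapsto u/\sqrt N$, and applies Laplace's method around the saddle points $\alpha$ and $\alpha\nu/(\nu-\eta)$, with the first-order terms producing the centering $e^{-\alpha\sqrt N\,t^*}$ and the second-order terms yielding the Gaussian factor $\exp\bigl(\alpha^2\eta(1-\eta)/(2(\nu-\eta)^2)\bigr)$. Your closing remarks on uniformity and on passing from one-sided Laplace transforms to convergence in distribution are in fact more careful than the paper, which leaves those points implicit.
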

\begin{proof}
Proposition~\ref{prop:laplace} gives the equation
\begin{multline}\label{JQ}
\E_0\left(e^{-\alpha \sqrt{N} T_{C_N}^{X_N}}\right) = 
\int_0^{\sqrt{N}} \left(1-\frac{u}{\sqrt{N}}\right)^N u^{\sqrt{N}\alpha-1}\diff u\\
\left/\int_0^{\sqrt{N}} \left(1-\frac{u}{\sqrt{N}}\right)^{N-C_N} \left(1+\frac{\mu}{\nu} \frac{u}{\sqrt{N}}\right)^{C_N} u^{\sqrt{N}\alpha-1}\diff u,\right.
\end{multline}
for $\alpha\geq 0$. 
The integrand of the  numerator of the right-hand side  of the above equation can be expressed as $\exp(f_N(u))$ with  $$f_N(u)=N\log\left(1-{u}/{\sqrt{N}}\right) + \left(\sqrt{N}\alpha-1\right)\log u.$$ 
The function has a unique maximum at 
\[
y_N=\frac{\alpha\sqrt{N}-1}{\sqrt{N}+\alpha-1/\sqrt{N}}= \alpha-\frac{1+\alpha^2}{\sqrt{N}}+o\left(1/\sqrt{N}\right),
\]
and 
\[
\begin{cases}
\displaystyle f_N(y_N)= (\alpha\log(\alpha)-\alpha)\sqrt{N}-\frac{\alpha^2}{2}-\log(\alpha)+o(1),\\
\displaystyle f_N^{''}(y_N)= -\frac{(1+\alpha^2)}{\alpha^2}-\frac{\sqrt{N}}{\alpha}+o(1).
\end{cases}
\]
Laplace's method, see Flajolet and Sedgewick~\cite{Flajolet-09} for example,  gives therefore the relation
\begin{align}\label{speq1} 
\int_0^{\sqrt{N}} e^{f_N(u)}\diff u&\sim \frac{\sqrt{2\pi}}{\sqrt{-f^{''}(y_N)}}e^{f_N(y_N)}
\\& \sim \frac{\sqrt{2\pi\alpha}}{N^{1/4}}\exp\left((\alpha\log(\alpha)-\alpha)\sqrt{N}-\frac{\alpha^2}{2}-\log(\alpha)\right).\notag
\end{align}
Similarly, the integrand of the denominator of the right-hand side of Equation~\eqref{JQ} is
$\exp(g_N(u))$ with 
\[
g_N(u)=(N-C_N)\log\left(1-\frac{u}{\sqrt{N}}\right) + C_N\log\left(1+\frac{\mu}{\nu}\frac{u}{\sqrt{N}}\right)+\left(\sqrt{N}\alpha-1\right)\log u.
\]
This concave function on the interval  $(0,\sqrt{N})$ has  a unique maximum located at $z_N=z_0-\delta/\sqrt{N}+o(1/\sqrt{N})$, with
\[
z_0=\alpha \frac{\nu}{\nu-\eta} \text{ and } \delta 
=\nu{\frac {  {\alpha}^{2}\left( {\nu}^{2}+\eta-2\eta\nu\right)+(\nu-\eta)^2}{ \left( \nu-\eta \right) ^{3}}},
\]
and, with some calculations,  one gets the following  expansions
\begin{multline*}
g_N(z_N)=\left(-\alpha+\alpha\log(\alpha)+\alpha\log\left(\frac{\nu}{\nu-\eta}\right)\right)\sqrt{N}
\\+\frac {\left(2\nu\eta-{\nu}^{2}-\eta\right)}{ \left(\nu-\eta \right) ^{2}}\frac{\alpha^2}{2}-\log(\alpha)-\log\left(\frac{\nu}{\nu-\eta}\right)+o(1)
\end{multline*}
and
\[
g_N^{''}(z_N)=-\left(1-\eta+\eta\frac{(1-\nu)^2}{\nu^2}\right)-\frac{\alpha\sqrt{N}-1}{z_0^2}+o(1).
\]
By using again Laplace's method, this gives the relation
\begin{multline}\label{speq2} 
\exp\left(-\alpha \sqrt{N}\log\frac{\nu}{\nu-\eta}\right)\int_0^{\sqrt{N}} e^{g_N(u)}\,\diff u\\\sim 
\frac{\sqrt{2\pi\alpha}}{N^{1/4}}\exp\left((\alpha\log(\alpha)-\alpha)\sqrt{N}+\frac {\left(2\nu\eta-{\nu}^{2}-\eta\right)}{ \left(\nu-\eta \right) ^{2}}\frac{\alpha^2}{2}-\log(\alpha)\right).
\end{multline}
Equation~\eqref{JQ} together with Relations~\eqref{speq1} and~\eqref{speq2} give finally
\[
\lim_{N\to+\infty} \E\left(\exp\left(-\alpha\sqrt{N}\left[T_{C_N}^{X_N}-\log({\nu}/{(\nu-\eta)})\right]\right)\right)=
\exp\left(\frac{\eta(1-\eta)}{(\nu-\eta)^2}\frac{\alpha^2}{2}\right),
\]
the proposition is proved. 
\end{proof}

\subsection*{An Informal Proof}
The limit theorem obtained in Proposition~\ref{supprop} is a consequence of some detailed,
annoying, but simple, calculations used to apply Laplace method. One can get quite quickly
an idea of the possible limit with the help of the exponential martingale $(M_N^\beta(t))$
of Proposition~\ref{prop:hbeta} through a non-rigorous derivation.  As it will be seen, it
gives the  correct answer but  its justification seems  to be difficult. The  main problem
comes from the fact that, in this martingale, the term $e^t$ stopped at some random time may not be integrable
at  all.  For  example,  it  is  easily  seen  that  the  first  jump  of  the  martingale
$(M_1^\beta(t))$ is not a regular stopping time for this martingale, i.e. the optional
stopping stopping theorem is not valid for this stopping time. 

Denote  $Z_N=\sqrt{N}\left(\exp(T^{X_N}_{C_N})-\exp(t^*)\right)$, 
where $t^*$ is, as before, $\log(\nu/(\nu-\eta))$.
By using the martingale~\eqref{expmart} of Proposition~\ref{prop:hbeta} by assuming that
the stopping time $T^{X_N}_{C_N}$ is regular for it, one gets
\[
\E\left[\left(1-\mu e^{T^{X_N}_{C_N}}{\beta}/{\sqrt{N}}\right)^{C_N}
\left(1+\nu e^{T^{X_N}_{C_N}}{\beta}/{\sqrt{N}}\right)^{N-C_N}\right]=
\left(1+\nu{\beta}/{\sqrt{N}} \right)^{N},
\]
this equation can be written as $\E\left(\exp(U_N)\right)=1$, 
with
\begin{multline*}
U_N=
{C_N}\log\left(1-\mu e^{T^{X_N}_{C_N}}{\beta}/{\sqrt{N}}\right)\\+
(N-C_N)\log\left(1+\nu e^{T^{X_N}_{C_N}}{\beta}/{\sqrt{N}}\right)
-N\log\left(1+\nu{\beta}/{\sqrt{N}} \right),
\end{multline*}
hence,
\begin{multline*}
U_N= -\beta (\eta-\nu) Z_N-\left(\left(\eta\mu^2+(1-\eta)\nu^2\right)e^{2t^*}-\nu^2\right){\beta^2}/{2}
+o\left({1}/{N}\right)\\=-\beta (\eta-\nu) Z_N-\frac{\nu^2\eta(1-\eta)}{(\nu-\eta)^2}\beta^2/2+o\left({1}/{N}\right),
\end{multline*}
provided that the limit can be taken under the integral, one gets  finally 
\[
\lim_{N\to+\infty}\E\left(e^{-\beta Z_N}\right)=
\exp\left(\frac{\beta^2}{2}\frac{\nu^2\eta(1-\eta)}{(\nu-\eta)^4}\right).
\]
Expressed as a limit theorem for $T^{X_N}_{C_N}$, this is precisely the above proposition.

\section{Sub-Critical Regime}\label{subsec}
It is assumed in this section that $\nu<\eta$ so that the Ehrenfest process ``lives'' in the
interior of the state space, the hitting time of the boundary $C_N$ should be therefore
quite large. The following propositions give asymptotic results concerning this phenomenon.

The first result concerns the time it  takes to the Ehrenfest process to have all particles
in  one box  when,  initially, they  are all  in the  other
box. This  is of course  a very natural  quantity for this  process. In the  discrete time
case,  representations  of   the  average  of  this  quantity   have  been  obtained in a
symmetrical setting.  See Bingham~\cite{Bingham} and references therein.
\begin{proposition}\label{propsub1}
If $\nu<1$, $X_N(0)=0$ and $C_N=N$, then the sequence of random variables
$\left(N \nu^N T_{C_N}^{X_N}\right)$ converges in distribution to an
exponentially distributed random variable with parameter $1-\nu$. 
\end{proposition}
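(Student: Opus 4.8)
Since $C_N=N$, the Engset process coincides with the Ehrenfest process, so with $X_N(0)=0$ the variable $T_{C_N}^{X_N}$ is the hitting time $T_N^{E_N}$ of $N$ by $(E_N(t))$ started from $0$. By the second identity in Proposition~\ref{prop:laplace}, for $\alpha>0$,
\[
\E_0\!\left(e^{-\alpha N\nu^N T_N^{E_N}}\right)=\frac{D_0(s_N)}{D_N(s_N)},\qquad s_N\stackrel{\text{def.}}{=}\alpha N\nu^N ,
\]
and, since $0<\nu<1$, both $s_N\to0$ and $s_N\log N\to0$ as $N\to\infty$. The plan is to prove that this ratio converges to $(1-\nu)/(1-\nu+\alpha)$ for every $\alpha>0$; as this is the Laplace transform of an exponentially distributed variable with parameter $1-\nu$, and it equals $1$ at $\alpha=0$ so that the limit is a genuine probability distribution, the continuity theorem for Laplace transforms then yields the announced convergence in distribution.

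The two integrals are made explicit by expanding $(1+(\mu/\nu)u)^{x}$ in~\eqref{eq:BD} and using the Beta integral: for $s>0$,
\[
D_0(s)=\frac{\Gamma(s)\,\Gamma(N+1)}{\Gamma(N+1+s)},\qquad
D_N(s)=\sum_{k=0}^{N}\binom{N}{k}\left(\frac{\mu}{\nu}\right)^{k}\frac{1}{s+k}
=\frac1s+\sum_{k=1}^{N}\binom{N}{k}\left(\frac{\mu}{\nu}\right)^{k}\frac{1}{s+k}.
\]
For the numerator, standard Gamma-function estimates together with $s_N\log N\to0$ give $\Gamma(N+1+s_N)/\Gamma(N+1)\to1$ and $\Gamma(s_N)\sim1/s_N$, hence $D_0(s_N)\sim1/s_N=\nu^{-N}/(\alpha N)$.

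The core of the argument — and the step I expect to be the main obstacle — is the asymptotics of $D_N(s_N)$: the point is that the isolated term $1/s_N=\nu^{-N}/(\alpha N)$ and the remaining sum over $k\ge1$ turn out to be of the \emph{same} order $\nu^{-N}/N$, so neither may be neglected and a genuinely second-order estimate is needed. First I would remove the $s_N$ from the denominators: since $|1/(s_N+k)-1/k|\le s_N/k^2\le s_N$, the error in replacing $\sum_{k\ge1}\binom{N}{k}(\mu/\nu)^{k}/(s_N+k)$ by $\sum_{k\ge1}\binom{N}{k}(\mu/\nu)^{k}/k$ is at most $s_N(1+\mu/\nu)^{N}=s_N\nu^{-N}=\alpha N=o(\nu^{-N}/N)$. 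Next, writing $\binom{N}{k}(\mu/\nu)^{k}=\nu^{-N}\binom{N}{k}\mu^{k}\nu^{N-k}=\nu^{-N}\,\P(W=k)$ with $W$ a binomial$(N,\mu)$ variable,
\[
\sum_{k=1}^{N}\binom{N}{k}\left(\frac{\mu}{\nu}\right)^{k}\frac1k=\nu^{-N}\,\E\!\left[\frac{1}{W}\,\ind{W\ge1}\right].
\]
Since the law of $W$ concentrates around $k\approx\mu N$ with Chernoff-exponential tails, and $1/k=(1+o(1))/(\mu N)$ on that window, one obtains $N\,\E[W^{-1}\ind{W\ge1}]\to1/\mu$ (equivalently: $N/W\to1/\mu$ in probability, and uniform integrability holds because $N^2\,\E[W^{-2}\ind{W\ge1}]$ stays bounded — its $k=1$ term being $N^{3}\mu\nu^{N-1}\to0$ and its bulk contributing $\sim1/\mu^{2}$). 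Combining the two steps,
\[
D_N(s_N)=\frac{\nu^{-N}}{\alpha N}+\frac{\nu^{-N}}{\mu N}\bigl(1+o(1)\bigr),
\]
so that, using $\mu=1-\nu$,
\[
\frac{D_0(s_N)}{D_N(s_N)}\longrightarrow\frac{1/\alpha}{1/\alpha+1/\mu}=\frac{\mu}{\mu+\alpha}=\frac{1-\nu}{1-\nu+\alpha}\qquad(N\to\infty).
\]
This is the required Laplace transform, and the proposition follows.
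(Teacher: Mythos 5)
Your proof is correct, and its skeleton is the same as the paper's: start from the ratio $D_0(\alpha_N)/D_N(\alpha_N)$ of Proposition~\ref{prop:laplace} with $\alpha_N=\alpha N\nu^N$, show the numerator is $\sim 1/\alpha_N$, and show the denominator equals $1/\alpha_N$ plus a competing term of the exact same order $\nu^{-N}/(\mu N)$ — you correctly identify this second-order cancellation as the crux. Where you diverge is in the technique for that second-order term. The paper stays with the integral: it subtracts $\int_0^1 u^{\alpha_N-1}\,\diff u=1/\alpha_N$, integrates by parts, substitutes $u\mapsto 1-u/N$, and passes to the limit $\alpha\int_0^\infty \mu e^{-\mu u}u\,\diff u=\alpha/\mu$ by dominated convergence. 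You instead expand $D_N$ via the Beta integral into the exact sum $\sum_k\binom{N}{k}(\mu/\nu)^k/(s_N+k)$, strip off the $k=0$ term, replace $1/(s_N+k)$ by $1/k$ with a controlled error $O(\alpha N)=o(\nu^{-N}/N)$, and recognize the remainder as $\nu^{-N}\,\E[W^{-1}\ind{W\ge1}]$ for $W$ binomial$(N,\mu)$, concluding by concentration plus uniform integrability that $N\,\E[W^{-1}\ind{W\ge1}]\to 1/\mu$. This is a legitimate alternative and arguably more transparent probabilistically (the $1/\mu$ is visibly the reciprocal of the mean of $W$), at the cost of reintroducing the combinatorial sums the paper's integral representation was designed to avoid; your uniform-integrability check is stated tersely (the $k=1$ term alone does not cover all small $k$, but the Chernoff bound you invoke does handle the whole range $k\le\epsilon N$ uniformly since $N^2/k^2\le N^2$ there), so it would be worth spelling that out. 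Both routes also require the identification $T^{X_N}_{C_N}=T^{E_N}_N$ when $C_N=N$, which you state and the paper uses implicitly.
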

\begin{proof}
One uses Equation~\eqref{eq:laplace-c} of Proposition~\ref{prop:laplace} to get that, 
for $N\geq 1$, 
\begin{equation}\label{eqaux}
\E\left(e^{-\alpha_N T_{N}^{X_N}}\right)= 
\int_0^1(1-u)^Nu^{\alpha_N-1}\,\diff u\left/\int_0^1\left(1+\frac{\mu}{\nu}u\right)^Nu^{\alpha_N-1}\,\diff u\right.,
\end{equation}
with $\alpha_N=\alpha N \nu^N$, for some  $\alpha>0$. 

The numerator of this expression can be written as, after an integration by parts, 
\begin{multline*}
\int_0^1(1-u)^Nu^{\alpha_N-1}\,\diff u=\int_0^1N(1-u)^{N-1} \frac{u^{\alpha_N}}{\alpha_N}\,\diff u
\\=N^{-\alpha_N}\int_0^N \left(1-\frac{u}{N}\right)^{N-1}
\frac{u^{\alpha_N}}{\alpha_N}\,\diff u\sim \frac{1}{\alpha_N},
\end{multline*}
by Lebesgue's Theorem. 

By subtracting $1/\alpha_N$ from the denominator of the right-hand side of
Equation~\eqref{eqaux}, one gets 
\begin{align*}
\Delta_N&\stackrel{\text{def.}}{=}\int_0^1\left(\left(1+\frac{\mu}{\nu}u\right)^N-1\right)u^{\alpha_N-1}\,\diff
u=\frac{N\mu}{\nu}\int_0^1
\left(1+\frac{\mu}{\nu}u\right)^{N-1}\frac{1-u^{\alpha_N}}{\alpha_N}\,\diff u\\
&=\frac{1}{\nu^N}\int_0^N \mu\left(1-\mu \frac{u}{N}\right)^{N-1}\frac{1-(1-u/N)^{\alpha_N}}{\alpha_N}\,\diff u,
\end{align*}
hence,
\begin{align*}
\alpha_N\Delta_N
&=\frac{\alpha_N}{N\nu^N}\int_0^N \mu\left(1-\mu \frac{u}{N}\right)^{N-1}\frac{1-(1-u/N)^{\alpha_N}}{\alpha_N/N}\,\diff u
\\&\sim \alpha \int_0^{+\infty}\mu e^{-\mu u} u\,\diff u=\frac{\alpha}{\mu}. 
\end{align*}
These two asymptotic results plugged into Equation~\eqref{eqaux} give the desired
convergence in distribution. 

\end{proof}
Theorem~2  of Bingham~\cite{Bingham}  provides a  similar result  in the  symmetrical case
$\mu=\nu$ and in discrete time. In the  present case, there is an additional factor $N$ in
the scaling of $T_{C_N}^{X_N}$ which is due  to the fact that the continuous time dynamics are
$N$ times faster than the discrete time case.

\begin{proposition}\label{propsub2}
If $C_N=\eta N+O(1)$, $\nu<\eta<1$ and $X_N(0)=0$, then if
\[
H=(1-\eta)\log\left(\frac{1-\eta}{1-\nu}\right)+\eta\log\left(\frac{\eta}{\nu}\right), 
\]
the sequence of random variables
\begin{equation}\label{hitrare}
\left(\frac{\sqrt{\eta(1-\eta)}}{(\eta-\nu)\sqrt{2\pi}}\sqrt{N}e^{-N H} \,T_{C_N}^{X_N}\right),
\end{equation}
converges in distribution to an exponentially distributed random variable with parameter
$1$.
\end{proposition}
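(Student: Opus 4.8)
The plan is to start from the closed-form Laplace transform of the hitting time $T_{C_N}^{X_N}$ given by Proposition~\ref{prop:laplace-bis} (equivalently Proposition~\ref{prop:laplace}, since $(X_N(t))$ and $(E_N(t))$ coincide up to the first visit of $C_N$): for $\alpha>0$,
\[
\E_0\!\left(e^{-\alpha T_{C_N}^{X_N}}\right)=\frac{D_0(\alpha)}{D_{C_N}(\alpha)},\qquad
D_x(\alpha)=\int_0^1(1-u)^{N-x}\Bigl(1+\frac{\mu}{\nu}u\Bigr)^{x}u^{\alpha-1}\,\diff u,
\]
with the notation of Proposition~\ref{prop:laplace}. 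Denote by $c_N$ the normalising constant appearing in~\eqref{hitrare}, fix $\alpha>0$, and set $\alpha_N=\alpha c_N$; since $H>0$ one has $\alpha_N\to0$. By the continuity theorem for Laplace transforms of non-negative random variables it is enough to prove that $D_0(\alpha_N)/D_{C_N}(\alpha_N)\to1/(1+\alpha)$, the transform of an $\mathrm{Exp}(1)$ variable.

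The numerator is treated exactly as in the proof of Proposition~\ref{propsub1}: an integration by parts, or the Beta--Gamma identity $D_0(\alpha_N)=\Gamma(\alpha_N)\Gamma(N{+}1)/\Gamma(N{+}1{+}\alpha_N)$, together with $\alpha_N\to0$ and $N^{\alpha_N}\to1$, gives $D_0(\alpha_N)\sim1/\alpha_N$. For the denominator, I would isolate the contribution of the singularity at $u=0$, where the integrand equals $1$:
\[
D_{C_N}(\alpha_N)=\frac1{\alpha_N}+\Delta_N,\qquad
\Delta_N=\int_0^1\Bigl[(1-u)^{N-C_N}\bigl(1+\tfrac{\mu}{\nu}u\bigr)^{C_N}-1\Bigr]u^{\alpha_N-1}\,\diff u .
\]
Writing the bracket as $e^{\varphi(u)}-1$ with $\varphi(u)=(N-C_N)\log(1-u)+C_N\log(1+\tfrac\mu\nu u)$, this $\varphi$ is strictly concave, with $\varphi(0)=0$, $\varphi'(0)=N(\eta-\nu)/\nu+O(1)>0$, and a unique interior maximiser $u^*=(\eta-\nu)/(1-\nu)+O(1/N)$, at which $1-u^*\sim(1-\eta)/(1-\nu)$ and $1+\tfrac\mu\nu u^*\sim\eta/\nu$. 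A short computation then gives $\varphi(u^*)=NH+o(1)$ — this is where $H$ emerges, as the relative entropy of $\mathrm{Bernoulli}(\eta)$ with respect to $\mathrm{Bernoulli}(\nu)$ — and $-\varphi''(u^*)=N(1-\nu)^2/(\eta(1-\eta))+o(N)$. Since $e^{\varphi}$ is exponentially large near $u^*$, the part of $\Delta_N$ away from $u^*$ is negligible and near $u^*$ one may replace $e^{\varphi(u)}-1$ by $e^{\varphi(u)}$; Laplace's method, exactly as in the proof of Proposition~\ref{supprop}, yields
\[
\Delta_N\sim e^{\varphi(u^*)}(u^*)^{\alpha_N-1}\sqrt{\frac{2\pi}{-\varphi''(u^*)}}\ .
\]
Using $(u^*)^{\alpha_N-1}\to1/u^*$ and the values of $\varphi(u^*)$ and $\varphi''(u^*)$ above, the right-hand side has precisely the order for which $\alpha_N\Delta_N\to\alpha$ — this is how the constant in~\eqref{hitrare} is calibrated. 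Hence $D_{C_N}(\alpha_N)=\alpha_N^{-1}(1+\alpha_N\Delta_N)\sim\alpha_N^{-1}(1+\alpha)$, and combined with $D_0(\alpha_N)\sim\alpha_N^{-1}$ this gives $D_0(\alpha_N)/D_{C_N}(\alpha_N)\to1/(1+\alpha)$, proving the proposition.

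The main obstacle is that the two pieces of $D_{C_N}(\alpha_N)$ — the term $1/\alpha_N$ produced by the reflection of the paths at $0$, and the Laplace term $\Delta_N$ produced by the interior saddle $u^*$ — are of the \emph{same} exponential order $e^{NH}$, so the limit is a genuine ratio: one needs sharp asymptotics for both, including the $N^{-1/2}$ prefactor coming from $-\varphi''(u^*)$, not merely their exponential rate. A secondary subtlety is that $\varphi(u^*)$ depends on $C_N$ beyond its leading term $\eta N$, so the $O(1)$ correction in $C_N=\eta N+O(1)$ has to be carried through the estimate of $e^{\varphi(u^*)}$; this is precisely where the hypothesis $C_N=\eta N+O(1)$, rather than merely $C_N/N\to\eta$, is used. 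The rest — uniform control of the integrand away from $u^*$ and near the endpoints $0$ and $1$, and the justification of the Laplace estimate — is routine.
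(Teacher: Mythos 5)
Your argument is correct and follows essentially the same route as the paper: the ratio $D_0(\alpha_N)/D_{C_N}(\alpha_N)$ with $\alpha_N\to0$, the estimate $D_0(\alpha_N)\sim1/\alpha_N$, the decomposition $D_{C_N}(\alpha_N)=1/\alpha_N+\Delta_N$, and Laplace's method at the interior maximiser $u^*=(\eta-\nu)/(1-\nu)$ with $\varphi(u^*)=NH+o(1)$ and $\varphi''(u^*)\sim-N(1-\nu)^2/(\eta(1-\eta))$. The only cosmetic difference is that the paper first integrates $\Delta_N$ by parts and then applies a Laplace-type estimate to the resulting integral $\int_0^1 f_N'(u)e^{f_N(u)}(y_0^{\alpha_N}-u^{\alpha_N})\alpha_N^{-1}\,\diff u$, whereas you apply Laplace's method directly to $\int_0^1 e^{\varphi(u)}u^{\alpha_N-1}\,\diff u$ after discarding the $-1$ near the saddle; both give $\Delta_N\sim(u^*)^{-1}e^{NH}\sqrt{2\pi/(-\varphi''(u^*))}$ and hence the same conclusion.
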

One remarks that  the exponential decay factor $H$  of the above proposition is  in fact a
relative  entropy of  Bernoulli random  variables  with respective  parameters $\eta$  and
$\nu$. Despite similar ``entropy'' expressions  appeared at several occasions in the study
of these processes, we have not been able to find a simple explanation for the occurrences
of these constants.
\begin{proof}
For $\alpha>0$, denote by $\alpha_N$ the product of $\alpha$ and the coefficient of
$T_{C_N}^{X_N}$ in Expression~\eqref{hitrare}.
Equation~\eqref{eq:laplace-c} of Proposition~\ref{prop:laplace} is again used  
\begin{equation}\label{eqaux2}
\E\left(e^{-\alpha_N T_{N}^{X_N}}\right){=} \!\!
\int_0^1(1{-}u)^Nu^{\alpha_N{-}1}\,\diff u\left/\!\!\int_0^1(1{-}u)^{N-C_N}\left(1{+}\frac{\mu}{\nu}u\right)^{C_N}u^{\alpha_N{-}1}\,\diff u\right.,
\end{equation}
The asymptotic behavior of the numerator of this Laplace transform has already been obtained in the proof of the
above proposition. 

To study the denominator, one will proceed as before. For $u\in[0,1]$, denote 
\[
f_N(u)=(N-C_N)\log(1-u)+C_N\log\left(1+{\mu}u/{\nu}\right),
\]
this function has a unique maximum at 
\[
y_0\stackrel{\text{def.}}{=}\frac{C_N/N-\nu}{1-\nu}=\frac{(\eta-\nu)}{(1-\nu)}+O(1/N)
\]
which is given by
\[
f_N(y_0)=
\left[(1-\eta)\log\left(\frac{1-\eta}{1-\nu}\right)+\eta\log\left(\frac{\eta}{\nu}\right)\right]
N +o(1),
\]
and
\[
f_N''(y_0)=-\frac{(1-\nu)^2}{\eta(1-\eta)}N+o(1).
\]
The denominator of Laplace Transform~\eqref{eqaux2} is
\begin{multline*}
\int_0^1\left[(1{-}u)^{N-C_N}\left(1{+}\frac{\mu}{\nu}u\right)^{C_N}-1\right]u^{\alpha_N{-}1}\,\diff
u=\int_0^1\left[e^{f_N(u)}-1\right]u^{\alpha_N{-}1}\,\diff u
\\=
\int_0^1 f_n'(u)e^{f_N(u)}\frac{(y_0^{\alpha_N}-u^{\alpha_N})}{\alpha_M}\,\diff u+\frac{1-y_0^{\alpha_N}}{\alpha_N},
\end{multline*}
by integration by parts. The integral $I_N$ of the right-hand side of this relation can be written as 
\[
I_N=\int_{-y_0\sqrt{N} }^{(1-y_0)\sqrt{N}}\frac{1}{\sqrt{N}}f_N'\left(y_0+\frac{u}{\sqrt{N}}\right)e^{f_N(y_0+u/\sqrt{N})}
\frac{(y_0^{\alpha_N}-(y_0+u/\sqrt{N})^{\alpha_N})}{\alpha_N}\diff u,
\]
hence,
\begin{align*}
I_N &= y_0^{\alpha_N-1}\frac{-f_N''(y_0)}{N^{3/2}} e^{f_N(y_0)} \int_{-\infty}^{+\infty}
u^2\exp\left(\frac{f_N''(y_0)}{N}\frac{u^2}{2}\right)\,\diff u+o(1/N)\\
& =\frac{1}{y_0}\sqrt{\frac{2\pi}{-f_N''(0)}} e^{f_N(y_0)} +o(1/N).
\end{align*}
By gathering these asymptotic results one gets the relation
\[
\lim_{N\to+\infty} \E\left(\exp\left(-\alpha_N T_{N}^{X_N}\right)\right)={1}/{(1+\alpha)},
\]
the proposition is proved. 

\end{proof}

\begin{proposition}[Hitting time of the empty state]\label{prop:empty}
Under the condition  $\nu<\eta$ and if $C_N=\eta N+O(1)$ for $\eta>0$ and $X_N(0)=C_N$, then the sequence of variables
\[
\left(N(1-\nu)^NT_{0}^{X_N}\right)
\]
converges in distribution to an exponential random variable with parameter $\nu$.
\end{proposition}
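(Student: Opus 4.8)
The plan is to start from the exact identity of Proposition~\ref{prop:laplace-bis}, specialized to $x=0$ and $y=C_N$. Since the normalization $\mu+\nu=1$ is in force, write $\mu=1-\nu$ and, for a fixed $\alpha>0$, set $\alpha_N=\alpha N(1-\nu)^N=\alpha N\mu^N$, so that
\[
\E_{C_N}\!\left(e^{-\alpha_N T_0^{X_N}}\right)=\frac{d_N(\alpha_N)\,B_{C_N}(\alpha_N)+b_N(\alpha_N)\,D_{C_N}(\alpha_N)}{d_N(\alpha_N)\,B_0(\alpha_N)+b_N(\alpha_N)\,D_0(\alpha_N)}.
\]
It suffices to show that the right-hand side tends to $\nu/(\nu+\alpha)$, which is the Laplace transform of an exponential variable with parameter $\nu$; the convergence in distribution then follows from the continuity theorem for Laplace transforms. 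Throughout, $\alpha_N\to0$ faster than any power of $1/N$, so every factor $u^{\pm\alpha_N}$, or $(cN)^{\pm\alpha_N}$ with $c>0$ a constant, equals $1+o(1)$ uniformly on any region bounded away from $u=0$.

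The next step is to obtain the asymptotics of the six integrals. For $B_0$, I would expand $(1+\tfrac\nu\mu u)^N$ in powers of $u$ and integrate termwise, getting $B_0(\alpha_N)=\tfrac1{\alpha_N}+\sum_{k=1}^N\binom Nk(\nu/\mu)^k/(k+\alpha_N)$; writing $\binom Nk(\nu/\mu)^k=\mu^{-N}\binom Nk\nu^k\mu^{N-k}$, the sum equals $\mu^{-N}\E\bigl[\ind{K\ge1}/(K+\alpha_N)\bigr]$ with $K\sim\mathrm{Bin}(N,\nu)$, which concentrates at $\nu N$, so it is $\sim\mu^{-N}/(\nu N)$ and hence
\[
B_0(\alpha_N)\sim\frac{\mu^{-N}}{N}\left(\frac1\alpha+\frac1\nu\right)=\frac{\nu+\alpha}{\nu\alpha}\cdot\frac{\mu^{-N}}{N}.
\]
For $B_{C_N}$ and $b_N$, an integration by parts (the boundary terms vanish, as $(1-u)^{C_N}$ kills the endpoint $u=1$ and $u^{\alpha_N}$ kills $u=0$) together with the fact that $\psi_1(u)=\eta\log(1-u)+(1-\eta)\log(1+\tfrac\nu\mu u)$ is concave with $\psi_1(0)=0$ and $\psi_1'(0)=-(\eta-\nu)/\mu<0$, hence strictly decreasing on $[0,1]$, shows that both are governed by a neighbourhood of $u=0$, so $B_{C_N}(\alpha_N)\sim\tfrac1{\alpha_N}=\mu^{-N}/(\alpha N)$ and $b_N(\alpha_N)=O(1/N)$. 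Finally $D_0(\alpha_N)=\int_0^1(1-u)^Nu^{\alpha_N-1}\,\diff u=N!/\bigl(\alpha_N(1+\alpha_N)\cdots(N+\alpha_N)\bigr)\sim1/\alpha_N$, because $\alpha_N\sum_{j=1}^N1/j\to0$.

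For $d_N$ and $D_{C_N}$ the relevant exponent is $\psi_2(u)=(1-\eta)\log(1-u)+\eta\log(1+\tfrac\mu\nu u)$, which is strictly concave, with $\psi_2'(0)=(\eta-\nu)/\nu>0$ and $\psi_2(u)\to-\infty$ as $u\to1^-$; hence it has a unique interior maximum at $u^*=(\eta-\nu)/\mu\in(0,1)$ (assuming $\eta<1$; the boundary case $C_N=N$, where the reflecting barrier is absent, is treated directly and more simply from Proposition~\ref{prop:laplace} with $B_{C_N}=B_N=D_0$), and using $1-u^*=(1-\eta)/\mu$ and $1+\tfrac\mu\nu u^*=\eta/\nu$ one computes $\psi_2(u^*)=(1-\eta)\log\frac{1-\eta}{1-\nu}+\eta\log\frac\eta\nu=:H$, the relative entropy of a $\mathrm{Bernoulli}(\eta)$ with respect to a $\mathrm{Bernoulli}(\nu)$, which is strictly positive since $\eta\neq\nu$. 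Here only crude bounds are needed: Laplace's method near $u^*$ gives $d_N(\alpha_N)\asymp e^{NH}/\sqrt N$, and splitting the $u^{\alpha_N-1}$ singularity at $u=0$ off from the interior peak gives the upper bound $D_{C_N}(\alpha_N)=O\bigl(e^{NH}\sqrt N+\mu^{-N}/N\bigr)$. Comparing orders of magnitude, $b_N(\alpha_N)D_{C_N}(\alpha_N)=o\bigl(d_N(\alpha_N)B_{C_N}(\alpha_N)\bigr)$ in the numerator, since the ratio is $O\bigl(N\mu^N+(\sqrt N\,e^{NH})^{-1}\bigr)$, and $b_N(\alpha_N)D_0(\alpha_N)=o\bigl(d_N(\alpha_N)B_0(\alpha_N)\bigr)$ in the denominator, since the ratio is $O\bigl((\sqrt N\,e^{NH})^{-1}\bigr)$; both tend to $0$ because $H>0$. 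The factor $d_N(\alpha_N)$ thus cancels in the limit, and
\[
\E_{C_N}\!\left(e^{-\alpha_N T_0^{X_N}}\right)\sim\frac{B_{C_N}(\alpha_N)}{B_0(\alpha_N)}\longrightarrow\frac{1/\alpha}{(\nu+\alpha)/(\nu\alpha)}=\frac{\nu}{\nu+\alpha}.
\]

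The one genuinely delicate step will be the asymptotics of $B_0(\alpha_N)$: unlike the other five integrals it collects two contributions of the same order $\mu^{-N}/N$ — the ``pole'' $1/\alpha_N$ coming from the endpoint $u=0$, and the ``bulk'' term $\mu^{-N}/(\nu N)$ coming from the maximum of the binomial coefficients near $k\approx\nu N$ — and the value $\nu/(\nu+\alpha)$ of the limiting transform hinges on their exact combination $\tfrac1\alpha+\tfrac1\nu$, so both have to be evaluated to leading order, and one must check that the part of the series with $1\le k\ll N$ and the $u^{\alpha_N}$ corrections (in the regime where $\alpha_N$ tends to $0$ super-exponentially fast) are of strictly smaller order. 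The remainder is routine: elementary integration by parts for $B_{C_N}$, $b_N$ and $D_0$, and a standard Laplace-method estimate for $d_N$ and $D_{C_N}$, for which, as noted, only crude two-sided bounds using the positivity of $H$ are required because the precise prefactors cancel.
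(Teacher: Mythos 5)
Your proof is correct and takes essentially the same route as the paper: the same Laplace-transform identity from Proposition~\ref{prop:laplace-bis} with $\alpha_N=\alpha N(1-\nu)^N$, the same six quantities $b_N,d_N,B_0,D_0,B_{C_N},D_{C_N}$, the same identification of $d_NB_{C_N}$ and $d_NB_0$ as the dominant terms, and the same limit $B_{C_N}(\alpha_N)/B_0(\alpha_N)\to\nu/(\nu+\alpha)$. The only divergences are in the elementary estimates --- $B_0$ via termwise expansion and binomial concentration instead of integration by parts, and $d_N$, $D_{C_N}$ via a genuine interior-maximum Laplace analysis at $u^*=(\eta-\nu)/\mu$ with value $e^{NH}$ --- and your treatment of the latter pair is in fact more careful than the paper's own (whose $u\mapsto u/\sqrt N$ rescaling of $d_N$ and whose claim that $D_{C_N}(\alpha_N)=1/\alpha_N+o(\mu^{-N}/N)$ are inaccurate when $H>-\log(1-\nu)$, harmlessly so since those terms carry the factor $b_N=O(1/N)$, exactly as your order-of-magnitude comparison shows).
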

Not   that   this   result   can   be,   informally,  justified   with   the   result   of
Proposition~\ref{propsub1}. Without  the boundary  $C_N$, one could  obtain the  result by
exchanging $\mu$ and $\nu$ and by  using Proposition~\ref{propsub1}. This result shows in particular
that the boundary does not change the limiting behavior of $T_0^{X_N}$ in the sub-critical regime.
\begin{proof}
Denote $\alpha_N=N(1-\nu)^N$. Proposition~\ref{prop:laplace-bis} gives the relation
\begin{equation}\label{eq11}
\E_{C_N}\left(e^{-\alpha_N T_0^{X_N}}\right) = \frac{d_{N}(\alpha_N)B_{C_N}(\alpha_N)+b_{N}(\alpha_N)D_{C_N}(\alpha_N)}{d_{N}(\alpha)B_0(\alpha_N)+b_{N}(\alpha_N)D_0(\alpha_N)}.
\end{equation}
One starts with the asymptotic behavior  of $(d_N(\alpha_N))$,  
\begin{align*}
d_N&(\alpha_N)=\mu\int_0^1 \left(1-u\right)^{N-C_N-1}\left(1+\frac{\mu}{\nu}u\right)^{C_N}u^{\alpha_N}\,\diff u\\
&=  \frac{\mu}{\sqrt{N}}\int_0^{\sqrt{N}} \left(1-\frac{u}{\sqrt{N}}\right)^{N-C_N-1}\left(1+\frac{\mu}{\nu}\frac{u}{\sqrt{N}}\right)^{C_N}\,\diff u+o\left(1/\sqrt{N}\right)\\
&=
\frac{1}{\sqrt{N}}\exp\left(\frac{\eta-\nu}{\nu}\sqrt{N}\right)\int_0^{+\infty}\exp\left(-\left(\frac{\eta}{\nu^2}+\frac{(1-\eta)}{(1-\nu)^2}\right)\frac{u^2}{2}\right)\,\diff
u+o\left(1/\sqrt{N}\right).
\end{align*}
The other coefficient $b_{N}(\alpha_N)$ is such that 
\begin{align*}
b_{N}(\alpha_N) &= \nu\int_0^1 (1-u)^{C_N}\left(1+\frac{\nu}{\mu}u\right)^{N-C_N}u^{\alpha_N}\,\diff u\\ 
&= \frac{\nu}{N}\int_0^N\left(1-\frac{u}{N}\right)^{C_N}\left(1+\frac{\nu}{\mu N}u\right)^{N-C_N-1}\,\diff u+o\left(1/{N}\right)\\ 
&= \frac{\nu(1-\nu)}{\nu-\eta}\frac{1}{N}+o\left(1/{N}\right).
\end{align*}

The proof of Proposition~\ref{propsub1} provides the following relations
\[
D_0(\alpha_N)=\int_0^1(1-u)^Nu^{\alpha_N-1}\,\diff u\sim \frac{1}{\alpha_N},
\]
and 
\[
B_0(\alpha_N)=\int_0^1\left(1+\frac{\nu}{\mu}u\right)^Nu^{\alpha_N-1}\,\diff u
\sim\frac{1}{\alpha_N} + \frac{1}{\nu N(1-\nu)^N}.
\]
The two remaining terms to estimate  are
\begin{align*}
B_{C_N}(\alpha_N)&=\int_0^1(1-u)^{C_N}\left(1+\frac{\nu}{\mu}u\right)^{N-C_N}u^{\alpha_N-1}\,\diff u,\\
D_{C_N}(\alpha_N)&=\int_0^1(1-u)^{N-C_N} \left(1+\frac{\mu}{\nu}u\right)^{C_N}u^{\alpha_N-1}\,\diff u.
\end{align*}
With the same method as in the proof of
Proposition~\ref{propsub2}, one can show that the quantities
$B_{C_N}(\alpha_N)$ and  
$D_{C_N}(\alpha_N)$
can be written as ${1}/{\alpha_N}+o\left(B_0(\alpha_N)-{1}/{\alpha_N})\right)$.
More informally, the term $(1-u)^{N\cdot}$ under the integral for these two expressions
reduces by an exponential factor their asymptotic behavior. 

These various estimations give finally that 
\[
\lim_{N\to+\infty} \E\left(e^{-\alpha_N T_0^{X_N}}\right)=\lim_{N\to+\infty} 
\frac{B_{C_N}(\alpha_N)}{B_0(\alpha_N)}
=\frac{1}{1+\alpha/\nu}.
\]
The proposition is proved. 
\end{proof}

\section{Critical Regime}\label{critsec}
In this section, it is assumed that $C_N\sim \nu N$, if $X_N(0)=0$, the fluid limit of the
process is given by $\nu(1-\exp(-t))$, the fluid boundary $\nu$ is reach at time
$t=+\infty$. In fact, with a second order description, the process $X_N(t)$ can be written
as  $X_N(t)\sim \nu(1-\exp(-t))N+Y(t)\sqrt{N}$ for some ergodic diffusion process $(Y(t))$, so that the hitting
time $T_{C_N}^{X_N}$ of the boundary is such that
\[
\exp\left(-T_{C_N}^{X_N}\right)\sim \frac{Y(T_{C_N}^{X_N})}{\nu\sqrt{N}},
\]
which gives  a rough  estimation $T_{C_N}^{X_N}\sim\log(\sqrt{N})$. The  following proposition
shows that this approximation is fact quite precise. See also Theorem~4 of Flajolet and
Huillet~\cite{Flajolet}. 
\begin{proposition}
If $C_N=\nu N+\delta\sqrt{N}+o(\sqrt{N})$ with $\nu<1$, $\delta\in\R$, and $X_N(0)=0$, then the sequence of random variables
\[
\left(T_{C_N}^{X_N}-{\log(N)}/{2}\right)
\]
converges in distribution to a random variable $Z$ on $\R$ whose Laplace transform at
$\alpha>0 $ is given by,
\begin{equation}\label{eqcrit}
\E\left(e^{-\alpha Z}\right)={\Gamma(\alpha)}\left/\int_0^{+\infty} \exp\left(u\frac{\delta}{\nu}
-\frac{u^2}{2}\frac{(1-\nu)}{\nu}\right)u^{\alpha-1}\,\diff u.\right.
\end{equation}
If $\delta=0$, then the variable $Z-\log \left({\nu}/{(1-\nu)}\right)/2$ has the following density on $\R$,
\[
x\mapsto \sqrt{{2}/{\pi}}\exp\left(-x-e^{-2x}/{2}\right). 
\]
\end{proposition}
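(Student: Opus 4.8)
The plan is to start from the exact Laplace-transform formula~\eqref{eq:laplace-c} of Proposition~\ref{prop:laplace}, specialized to $x=0$, $y=C_N$, and apply it to the rescaled hitting time $T_{C_N}^{X_N}-\log(N)/2$. Setting $\alpha_N = \alpha$ (no rescaling of $\alpha$ is needed this time, since we are only shifting $T_{C_N}^{X_N}$, not multiplying it), we have
\[
\E_0\left(e^{-\alpha(T_{C_N}^{X_N}-\log(N)/2)}\right)=N^{\alpha/2}\,\frac{D_0(\alpha)}{D_{C_N}(\alpha)}
= N^{\alpha/2}\,\frac{\displaystyle\int_0^1(1-u)^N u^{\alpha-1}\diff u}{\displaystyle\int_0^1(1-u)^{N-C_N}\left(1+\frac{\mu}{\nu}u\right)^{C_N} u^{\alpha-1}\diff u}.
\]
(Here I use that $(X_N(t))$ and $(E_N(t))$ agree in law up to $T_{C_N}$, as in the proof of Proposition~\ref{prop:laplace-bis}, so the Engset hitting time has the Ehrenfest Laplace transform.) The numerator is elementary: substituting $u=v/N$ gives $N^{-\alpha}\int_0^N (1-v/N)^N v^{\alpha-1}\diff v \sim N^{-\alpha}\Gamma(\alpha)$ by dominated convergence, so $N^{\alpha/2}$ times the numerator behaves like $N^{-\alpha/2}\Gamma(\alpha)$. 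The work is therefore concentrated in the denominator $D_{C_N}(\alpha)$.

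For the denominator, the critical scaling $C_N=\nu N+\delta\sqrt N+o(\sqrt N)$ makes the integrand no longer concentrate at an interior point of $(0,1)$ the way it did in the sub-critical regime: the maximum of the exponent $f_N(u)=(N-C_N)\log(1-u)+C_N\log(1+\mu u/\nu)$ is at $y_0 = (C_N/N-\nu)/(1-\nu)$, which is now of order $\delta/(\sqrt N(1-\nu))\to 0$. So the relevant scale is $u=v/\sqrt N$, and I would substitute $u=v/\sqrt N$ and Taylor-expand. Using $\mu=1-\nu$, a direct expansion gives
\[
(N-C_N)\log\!\left(1-\frac{v}{\sqrt N}\right)+C_N\log\!\left(1+\frac{\mu}{\nu}\frac{v}{\sqrt N}\right)
= \frac{\delta}{\nu}v-\frac{(1-\nu)}{2\nu}v^2+o(1),
\]
where the linear-in-$\sqrt N$ terms cancel precisely because the coefficient $N-C_N-\nu\cdot\frac{C_N}{\nu}\cdot\frac{\mu}{\nu}$ of $v\sqrt N$ vanishes to leading order (this is the critical balance), and the $\delta\sqrt N$ correction in $C_N$ feeds the surviving linear term $\delta v/\nu$. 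Together with $u^{\alpha-1}\diff u = N^{-\alpha/2}v^{\alpha-1}\diff v$, dominated convergence yields
\[
D_{C_N}(\alpha)\sim N^{-\alpha/2}\int_0^{+\infty}\exp\!\left(\frac{\delta}{\nu}v-\frac{(1-\nu)}{2\nu}v^2\right)v^{\alpha-1}\diff v.
\]
Combining with the numerator estimate, the factors $N^{-\alpha/2}$ cancel and we obtain exactly~\eqref{eqcrit}, which by Lévy's continuity theorem (the right-hand side being continuous at $\alpha=0$ and equal to $1$ there, as one checks) gives convergence in distribution of $T_{C_N}^{X_N}-\log(N)/2$ to the claimed $Z$.

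For the last assertion, set $\delta=0$. Then the denominator integral is $\int_0^\infty e^{-(1-\nu)v^2/(2\nu)}v^{\alpha-1}\diff v$; the substitution $w=(1-\nu)v^2/(2\nu)$ turns this into $\frac12\left(\frac{2\nu}{1-\nu}\right)^{\alpha/2}\Gamma(\alpha/2)$, so
\[
\E\left(e^{-\alpha Z}\right)=\frac{\Gamma(\alpha)}{\tfrac12\left(2\nu/(1-\nu)\right)^{\alpha/2}\Gamma(\alpha/2)}
=\left(\frac{1-\nu}{2\nu}\right)^{\alpha/2}\frac{2\Gamma(\alpha)}{\Gamma(\alpha/2)}.
\]
Using the duplication formula $\Gamma(\alpha)=2^{\alpha-1}\pi^{-1/2}\Gamma(\alpha/2)\Gamma((\alpha+1)/2)$ gives $2\Gamma(\alpha)/\Gamma(\alpha/2)=2^{\alpha}\pi^{-1/2}\Gamma((\alpha+1)/2)$, hence $\E(e^{-\alpha Z})=\left(\frac{1-\nu}{\nu}\right)^{\alpha/2}\cdot 2^{\alpha/2}\cdot\pi^{-1/2}\,2^{\alpha/2}\cdots$ — I would reorganize this as $\E(e^{-\alpha(Z-\frac12\log(\nu/(1-\nu)))})=\pi^{-1/2}2^{\alpha/2}\Gamma((\alpha+1)/2)$, and then recognize the right-hand side as the Laplace transform of the density $x\mapsto\sqrt{2/\pi}\exp(-x-e^{-2x}/2)$ on $\R$ by the substitution $s=e^{-2x}/2$ inside $\int_\R e^{-\alpha x}\sqrt{2/\pi}e^{-x-e^{-2x}/2}\diff x$, which reduces to a Gamma integral in $s$.

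The main obstacle is the bookkeeping in the Taylor expansion of $f_N(v/\sqrt N)$: one must verify carefully that the $O(\sqrt N)$ term cancels using $\mu+\nu=1$ together with the precise form $C_N=\nu N+\delta\sqrt N+o(\sqrt N)$, and that the $o(1)$ remainder is uniform enough (dominated by an integrable function of $v$ on $[0,\sqrt N]$) to justify passing the limit under the integral sign; this domination requires a little care near $v=0$ when $\alpha<1$ and near the upper endpoint $v=\sqrt N$. Everything else — the numerator asymptotics, the Gamma-function manipulations, and the final density identification — is routine.
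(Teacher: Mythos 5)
Your proposal is correct and follows essentially the same route as the paper: the same exact formula $\E_0(e^{-\alpha T_{C_N}^{X_N}})=D_0(\alpha)/D_{C_N}(\alpha)$, the same $u=v/\sqrt N$ substitution and Taylor expansion yielding $\delta v/\nu-(1-\nu)v^2/(2\nu)$ in the denominator, and the same duplication-formula and change-of-variable computation for the $\delta=0$ density. Your added remarks on justifying dominated convergence (near $v=0$ for $\alpha<1$ and at the upper endpoint) and on Lévy continuity are points the paper passes over silently, but they do not change the argument.
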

\noindent
As usual $\Gamma$ refers to the Gamma function,
\[
\Gamma(\alpha)=\int_0^{+\infty}e^{-u}u^{\alpha-1}\,\diff u,\quad \alpha>0,
\]
see Whittaker and Watson~\cite{Whittaker:01} for example.
Note that the Laplace transform of the limit in distribution is the ratio of the Mellin 
transforms of the functions
\[
u\mapsto \exp(-u) \quad \text{ and } u\mapsto \exp\left(-u\frac{\delta}{\nu}
-\frac{u^2}{2}\frac{(1-\nu)}{\nu}\right).
\]
See Flajolet and Sedgewick~\cite{Flajolet-09} on Mellin transforms. 
\begin{proof}
Proposition~\ref{prop:laplace} gives the equation
\[
\E_0\left(e^{-\alpha T_{C_N}^{X_N}}\right) = 
\left.{\int_0^{1} \left(1-u\right)^N u^{\alpha-1}\diff u}\right/{\int_0^{1} \left(1-u\right)^{N-C_N} \left(1+\frac{\mu}{\nu }u\right)^{C_N} u^{\alpha-1}\diff u},
\]
for $\alpha>0$. The asymptotic behavior of the numerator is easy since
\[
\int_0^{1} \left(1-u\right)^N u^{\alpha-1}\diff u=\frac{1}{N^\alpha}\int_0^{N} \left(1-\frac{u}{N}\right)^N u^{\alpha-1}\diff u
\sim \frac{\Gamma(\alpha)}{N^\alpha}.
\]
The denominator can be expressed as
\[
\frac{1}{N^{\alpha/2}}\int_0^{\sqrt{N}} e^{f_N(u)}u^{\alpha-1}\diff u,
\]
with 
\begin{align*}
f_N(u)&=(N-C_N)\log\left(1-\frac{u}{\sqrt{N}}\right)+C_N\log\left(1+\frac{\mu}{\nu}\frac{u}{\sqrt{N}}\right)\\
&=-\frac{(1-\nu)}{\nu}\frac{u^2}{2}+\frac{\delta}{\nu}
u+o(1/\sqrt{N}).
\end{align*}
By using Lebesgue's Theorem, one gets therefore that, for $\alpha>0$,
\[
\lim_{N\to+\infty} \E_0\left(e^{-\alpha \left[T_{C_N}^{X_N}-\log N/2\right]}\right) = {\Gamma(\alpha)}\left/
\int_0^{+\infty}\exp\left(-\frac{(1-\nu)}{\nu}\frac{u^2}{2}+\frac{\delta}{\nu} u\right)u^{\alpha-1}\,\diff
  u\right.,
\]
and hence the first part of the proposition.  

Assume that $\delta=0$, a change of
variable gives
\[
\int_0^{+\infty} \exp\left(-\frac{u^2}{2}\frac{(1-\nu)}{\nu}\right)u^{\alpha-1}\,\diff u= 
\frac{1}{2}\left(\frac{2\nu}{1-\nu}\right)^{\alpha/2}\Gamma(\alpha/2).
\]
The Laplace transform of $Z$ can therefore be expressed as 
\[
\E\left(e^{-\alpha Z}\right)=2\left(\frac{1-\nu}{2\nu}\right)^{\alpha/2} \frac{\Gamma(\alpha)}{\Gamma(\alpha/2)}
=\left(\frac{1-\nu}{\nu}\right)^{\alpha/2} \frac{2^{\alpha/2}}{\sqrt{\pi}}\Gamma((\alpha+1)/2),
\]
by using Legendre's  duplication Formula for  Gamma functions. See Whittaker and
Watson~\cite{Whittaker:01} page~240. Since
\begin{multline*}
\frac{2^{\alpha/2}}{\sqrt{\pi}}\Gamma\left(\frac{\alpha+1}{2}\right)=
\frac{1}{\sqrt{\pi}}
\int_0^{+\infty} \exp\left({\alpha}\log(2u)/2 -{\log(u)}/{2}-u\right)\,\diff u\\
=\sqrt{\frac{2}{\pi}}\int_{-\infty}^{+\infty}e^{-\alpha
u}\exp\left(-u-e^{-2u}/{2}\right)\,\diff u,
\end{multline*}
with a change  of variables, one gets the  desired result on the distribution  of $Z$. The
proposition is proved.
\end{proof}

One concludes with  the hitting time of empty  state, one can remark that,  at the correct
time scale, the time  is half of the corresponding variable in  the sub-critical case. See
Proposition~\ref{prop:empty}. A simple, naive, explanation is as follows. For sub-critical
regime the process lives  in a region centered at $\nu N$ and whose  width is of the order
of $\sqrt{N}$  and therefore makes many  excursions in this region  before reaching $C_N$.
In  the critical  case  the process  lives near,  but  only on  one  side of  $\nu N$.  In
particular  it cannot  go  above $C_N$  and, consequently,  does  not waste  time on  such
excursions.
\begin{proposition}[Hitting Time of Empty State]
If $C_N=\nu N+o(\sqrt{N})$ with $\nu>0$ and $X_N(0)=C_N$, then the sequence of random variables
\[
\left(N(1-\nu)^NT_{0}^{X_N}\right)
\]
converges in distribution to an exponential random variable with parameter $2\nu$.
\end{proposition}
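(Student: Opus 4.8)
The plan is to follow the scheme of the proof of Proposition~\ref{prop:empty} step by step; the single genuinely new feature of the critical regime is that the coefficients $b_N(\alpha_N)$ and $d_N(\alpha_N)$ turn out to be of the same order of magnitude, so that, unlike the sub-critical case, \emph{neither} of the two terms in the numerator or in the denominator of the Laplace transform below may be discarded. Fix $\alpha>0$ and set $\alpha_N=\alpha N(1-\nu)^N$, so that $\alpha_N\to0$. Proposition~\ref{prop:laplace-bis} gives, with the notations of Propositions~\ref{prop:laplace} and~\ref{propmartref},
\[
\E_{C_N}\left(e^{-\alpha_N T_0^{X_N}}\right)=\frac{d_N(\alpha_N)B_{C_N}(\alpha_N)+b_N(\alpha_N)D_{C_N}(\alpha_N)}{d_N(\alpha_N)B_0(\alpha_N)+b_N(\alpha_N)D_0(\alpha_N)},
\]
and it is enough to determine the asymptotic behavior of the six quantities occurring here.

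Since the factors $(1-u)^N$ and $(1+\nu u/\mu)^N$ entering $D_0$ and $B_0$ do not involve $C_N$, the estimates from the proof of Proposition~\ref{prop:empty} hold unchanged: $D_0(\alpha_N)\sim 1/\alpha_N$ and $B_0(\alpha_N)=(1+\alpha/\nu)/\alpha_N+o(1/\alpha_N)$, the correction being $1/(\nu N(1-\nu)^N)=(\alpha/\nu)/\alpha_N$. For $B_{C_N}(\alpha_N)$ and $D_{C_N}(\alpha_N)$ the reasoning is also that of Proposition~\ref{prop:empty}: the polynomial part of each integrand --- for instance $(1-u)^{C_N}(1+\nu u/\mu)^{N-C_N}$ for $B_{C_N}$ --- equals $1$ at $u=0$, is maximal (asymptotically at $u=0$) with value $1+o(1)$, and has total mass $\int_0^1(\text{polynomial part})\,\diff u=O(1/\sqrt N)$; splitting the integral at a scale $u_N$ with $Nu_N^2\to0$ (say $u_N=N^{-3/4}$), the part near $0$ contributes $(1+o(1))/\alpha_N$ through the singular factor $u^{\alpha_N-1}$, while the rest is bounded by $u_N^{\alpha_N-1}\int_0^1(\text{polynomial part})\,\diff u=O(1/(u_N\sqrt N))$, which is polynomial in $N$ and hence $o(1/\alpha_N)$ since $1/\alpha_N$ grows exponentially. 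Thus $B_{C_N}(\alpha_N)=1/\alpha_N+o(1/\alpha_N)$ and likewise $D_{C_N}(\alpha_N)=1/\alpha_N+o(1/\alpha_N)$.

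It remains to treat $b_N(\alpha_N)$ and $d_N(\alpha_N)$, and this is where the hypothesis $C_N\sim\nu N$ is really used. In both integrals the change of variables $u=v/\sqrt N$, together with $C_N=\nu N+o(\sqrt N)$ and $N-C_N=(1-\nu)N+o(\sqrt N)$, makes the terms linear in $v$ cancel in the exponent, leaving a Gaussian integrand; by Lebesgue's theorem one then gets $\sqrt N\,d_N(\alpha_N)\to\mu\int_0^{+\infty}e^{-\frac{1-\nu}{\nu}v^2/2}\,\diff v$ and $\sqrt N\,b_N(\alpha_N)\to\nu\int_0^{+\infty}e^{-\frac{\nu}{1-\nu}v^2/2}\,\diff v$. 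These two limits are \emph{equal}, both being $\sqrt{\pi\nu(1-\nu)/2}$ --- the Gaussian variances differ, but the prefactors $\mu\sqrt{\nu/(1-\nu)}$ and $\nu\sqrt{(1-\nu)/\nu}$ coincide --- so in particular $b_N(\alpha_N)/d_N(\alpha_N)\to1$.

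Combining everything, the numerator of the ratio equals $(d_N(\alpha_N)+b_N(\alpha_N))(1+o(1))/\alpha_N\sim 2d_N(\alpha_N)/\alpha_N$, and the denominator equals $(d_N(\alpha_N)(1+\alpha/\nu)+b_N(\alpha_N))(1+o(1))/\alpha_N\sim d_N(\alpha_N)(2+\alpha/\nu)/\alpha_N$, hence
\[
\lim_{N\to+\infty}\E_{C_N}\left(e^{-\alpha N(1-\nu)^N T_0^{X_N}}\right)=\frac{2}{2+\alpha/\nu}=\frac{2\nu}{2\nu+\alpha},
\]
which is the Laplace transform of the exponential distribution with parameter $2\nu$, and the convergence in distribution follows. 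I expect the main obstacle to be the bookkeeping of the error terms, and in particular the two facts on which the value $2\nu$ genuinely rests: that the exponentially large singular contribution at $u=0$ really does dominate $B_{C_N}(\alpha_N)$ and $D_{C_N}(\alpha_N)$, and that $b_N(\alpha_N)$ and $d_N(\alpha_N)$ have \emph{exactly} the same leading constant --- a limit $b_N(\alpha_N)/d_N(\alpha_N)\to r$ would produce $(r+1)/(r+1+\alpha/\nu)$, which is the Laplace transform of the exponential distribution with parameter $2\nu$ only when $r=1$.
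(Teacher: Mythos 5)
Your proposal is correct and follows essentially the same route as the paper: the same Laplace-transform identity from Proposition~\ref{prop:laplace-bis}, the same Gaussian rescaling $u=v/\sqrt{N}$ showing $\sqrt{N}\,b_N(\alpha_N)$ and $\sqrt{N}\,d_N(\alpha_N)$ share the common limit $\sqrt{\pi\nu(1-\nu)/2}$, and the same estimates $B_0\sim(1+\alpha/\nu)/\alpha_N$, $D_0\sim B_{C_N}\sim D_{C_N}\sim 1/\alpha_N$ leading to $2/(2+\alpha/\nu)$. You are in fact somewhat more explicit than the paper on the point it leaves implicit, namely the splitting argument justifying $B_{C_N}(\alpha_N),D_{C_N}(\alpha_N)=(1+o(1))/\alpha_N$ and the identification of the leading constants of $b_N$ and $d_N$.
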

\begin{proof}
Denote $\alpha_N=N(1-\nu)^N$. Recall that
\[
\E_{C_N}\left(e^{-\alpha_N T_0^{X_N}}\right) = \frac{d_{N}(\alpha)B_{C_N}(\alpha)+b_{N}(\alpha)D_{C_N}(\alpha)}{d_{N}(\alpha)B_0(\alpha)+b_{N}(\alpha)D_0(\alpha)}.
\]
One starts with the asymptotic behavior  of $(d_N(\alpha_N))$,  by definition
\begin{multline*}
d_N(\alpha_N){=}\mu\int_0^1 (1-u)^{N-C_N-1}\left(1+\frac{\mu}{\nu}u\right)^{C_N}u^{\alpha_N}\,\diff u
\\= \mu \frac{1}{\sqrt{N}}\int_0^{\sqrt{N}} \left(1-\frac{u}{\sqrt{N}}\right)^{N-C_N-1}\left(1+\frac{\mu}{\nu}\frac{u}{\sqrt{N}}\right)^{C_N}\,\diff u+o\left(1/\sqrt{N}\right).\end{multline*}
Since $\eta=\nu$ and $\mu=1-\nu$, the relation $\nu(1-\eta)=\mu\eta$ holds and therefore
\begin{align*}
d_N(\alpha_N) &= (1-\nu)\frac{1}{\sqrt{N}}\int_0^{+\infty} \exp\left(-\frac{(1-\nu)}{\nu}\frac{u^2}{2}\right)\,\diff
 u+o\left(1/\sqrt{N}\right)\\
&=  \frac{1}{\sqrt{N}}\sqrt{\frac{\pi}{2}}\sqrt{\nu(1-\nu)}+o\left(1/\sqrt{N}\right).
\end{align*}
Note that, up to a term $-1$ in an exponent which does not play a role in the limiting behavior,
the quantity $b_N(\alpha)$  is almost $d_N(\alpha)$ with $\nu$ replaced by
$(1-\nu)$. Consequently $b_N(\alpha)$ has the same asymptotic expansion as $d_N(\alpha)$. 

The asymptotic behaviors of the quantities $B_0(\alpha_N)$, $D_0(\alpha_N)$,
$B_{C_N}(\alpha_N)$ and $D_{C_N}(\alpha_N)$ are the same as the ones obtained in the proof
of Proposition~\ref{prop:empty}. By gathering these various estimations one gets that
\[
\lim_{N\to+\infty} \E\left(\exp\left(-\alpha_N T_0^{X_N}\right)\right)={2}/{(2+\alpha/\nu)},
\]
the proposition is proved. 
\end{proof}

% LocalWords:  Flajolet Huillet Mellin

\providecommand{\bysame}{\leavevmode\hbox to3em{\hrulefill}\thinspace}
\providecommand{\MR}{\relax\ifhmode\unskip\space\fi MR }
% \MRhref is called by the amsart/book/proc definition of \MR.
\providecommand{\MRhref}[2]{%
  \href{http://www.ams.org/mathscinet-getitem?mr=#1}{#2}
}
\providecommand{\href}[2]{#2}

\end{document}